\definecolor{maroon}{rgb}{.69,.188,.376}
\definecolor{darkgreen}{rgb}{0,.5,0}
\definecolor{darkblue}{rgb}{0,0,.5}
\definecolor{magenta}{rgb}{1,0,1}
\newcommand{\ind}{\mathds}
\newcommand{\floor}[1]{{\lfloor #1 \rfloor}}
\newcommand{\Z}{\ensuremath{\mathbb{Z}}}
\newcommand{\N}{\ensuremath{\mathbb{N}}}
\newcommand{\R}{\ensuremath{\mathbb{R}}}
\newcommand{\asto}[1]{\underset{{#1}\to\infty}{\longrightarrow}}
\newcommand{\E}{\ensuremath{\mathbb{E}}}
\renewcommand{\P}{\ensuremath{\mathbb{P}}}
\newtheorem{theorem}{Theorem}[section]
\newtheorem{lemma}[theorem]{Lemma}
\newtheorem{proposition}[theorem]{Proposition}
\newtheorem{remark}[]{Remark}
\numberwithin{equation}{section}
\definecolor{Red}{rgb}{1,0,0}
\definecolor{Blue}{rgb}{0,0,1}
\definecolor{Olive}{rgb}{0.41,0.55,0.13}
\definecolor{Yarok}{rgb}{0,0.5,0}
\definecolor{Green}{rgb}{0,1,0}
\definecolor{MGreen}{rgb}{0,0.8,0}
\definecolor{DGreen}{rgb}{0,0.55,0}
\definecolor{Yellow}{rgb}{1,1,0}
\definecolor{Cyan}{rgb}{0,1,1}
\definecolor{Magenta}{rgb}{1,0,1}
\definecolor{Orange}{rgb}{1,.5,0}
\definecolor{Violet}{rgb}{.5,0,.5}
\definecolor{Purple}{rgb}{.75,0,.25}
\definecolor{Brown}{rgb}{.75,.5,.25}
\definecolor{Grey}{rgb}{.7,.7,.7}
\definecolor{Black}{rgb}{0,0,0}
\newcommand{\ignore}[1]{{}}
\date{\today}
\begin{document}
\baselineskip=14pt

\title{Subdiffusivity of a random walk among a Poisson system of moving traps on $\Z$}

\author{
Siva Athreya%
  \thanks{8th Mile Mysore Road, Indian Statistical Institute,
         Bangalore 560059, India.
    Email: \url{athreya@ms.isibang.ac.in}}
  \and
  Alexander Drewitz%
  \thanks{Universit\"at zu K\"oln,
Mathematisches Institut,
Weyertal 86--90,
50931 K\"oln, Germany.
    Email: \url{drewitz@math.uni-koeln.de}}
  \and
 Rongfeng Sun%
  \thanks{Department of Mathematics, National University of Singapore,
S17, 10 Lower Kent Ridge Road
Singapore, 119076.
    Email: \url{matsr@nus.edu.sg}}
}

\maketitle

\begin{abstract}
We consider a random walk among a Poisson system of moving traps on
$\Z$. In earlier work \cite{DrGaRaSu-10}, the quenched and annealed
survival probabilities of this random walk have been
investigated. Here we study the path of the random walk conditioned on
survival up to time $t$ in the annealed case and show that it is
subdiffusive. As a by-product, we obtain an upper bound on the number
of so-called {\em thin points} of a one-dimensional random walk, as
well as a bound on the total volume of the holes in the random walk's
range.
\end{abstract}

\noindent {\em AMS 2010 Subject Classification :} 60K37, 60K35, 82C22.\\
\noindent {\em Keywords :} parabolic Anderson model, random walk in random potential, trapping dynamics, subdiffusive, thin points of a random walk. 

\section{Introduction}

Trapping problems have been studied in the statistical physics and
probability literature for decades, where a particle modeled by a
random walk or Brownian motion is killed when it meets one of the
traps. When the traps are Poisson distributed in space and immobile,
much has been understood, see e.g.\ the seminal works of Donsker and
Varadhan \cite{DoVa-75,DoVa-79} as well as the monograph by Sznitman
\cite{Sz-98} and the references therein. However, when the traps are
mobile, surprisingly little is known. In a previous work
\cite{DrGaRaSu-10} (see also \cite{PSSS13}), the long-time asymptotics
of the annealed and quenched survival probabilities were identified in
all dimensions, extending earlier work in the physics
literature~\cite{MOBC03, MOBC04}. The goal of the current work is to
investigate the path behavior of the one-dimensional random walk
conditioned on survival up to time $t$ in the annealed setting, which
is the first result of this type to our best knowledge. Note that the
model of random walk among mobile traps is a natural model for many
physical and biological phenomena, such as foraging predators vs prey,
or diffusing T-cells vs cancer cells in the blood stream.

We now recall the model considered in \cite{DrGaRaSu-10}. Given an
intensity parameter $\nu > 0$, we consider a family of i.i.d.\ Poisson
random variables $(N_y)_{y \in \Z^d}$ with mean $\nu$. Given
$(N_y)_{y\in\Z^d}$, we then start a family of independent simple
symmetric random walks $(Y^{j,y})_{y \in \Z^d, \; 1 \le j \le N_y}$ on
$\Z^d$, each with jump rate $\rho \ge 0$, with
$Y^{j,y}:=(Y^{j,y}_t)_{t\geq 0}$ representing the path of the $j$-th
trap starting from $y$ at time $0$. We will refer to these as
\lq$Y$-particles\rq\ or \lq traps\rq. For $t \ge 0$ and $x \in \Z^d,$
we denote by
\begin{equation}
\xi(t,x) := \sum_{y \in \Z^d, \; 1 \le j \le N_y} \delta_x (Y^{j,y}_t) \label{xidef}
\end{equation}
the number of traps at site $x$ at time $t.$

Let $X:=(X_t)_{t\geq 0}$ denote a simple symmetric random walk on
$\Z^d$ with jump rate $\kappa \ge 0$ (and later on a more general
random walk, see Theorem \ref{thm:main}) that evolves independently of
the $Y$-particles.  At each time $t$, the $X$ particle is killed with
rate $\gamma \xi(t,X_t),$ where $\gamma \ge 0$ is the interaction
parameter -- i.e., the killing rate is proportional to the number of
traps that the $X$ particle sees at that time instant. We denote the
probability measure underlying the $X$ and $Y$ particles by $\P$, and
if we consider expectations or probabilities with respect to only a
subset of the defined random variables, we give those as a
superscript, and sometimes also specify the starting configuration as
a subscript, such as $\P_0^X.$

Conditional on the realization of $\xi,$ the survival probability of
$X$ up to time $t$ is then given by
\begin{equation} \label{eq:quenchedSP}
 \E_0^X \Big[ \exp \Big \{ -\gamma \int_0^t \xi(s,X_s) \, {\rm d}s \Big\} \Big].
\end{equation}
This quantity is also referred to as the \lq quenched survival probability\rq. Taking expectation with respect to $\xi$ yields the \lq annealed  survival probability\rq
$$
Z^\gamma_t:= \E^\xi \Big[  \E_0^X \Big[ \exp \Big \{ -\gamma \int_0^t \xi(s,X_s) \, {\rm d}s \Big\} \Big]\Big].
$$

Since we will mainly be interested in the behavior of $X$, it is useful to integrate out $\xi$ in order to obtain
the annealed survival probability for a given realization of $X,$ i.e.,
\begin{equation} \label{eq:condAnnProb}
Z^\gamma_{t,X} := \E^\xi \Big[ \exp \Big\{ - \gamma \int_0^t \xi(s,X_s) \, {\rm d}s \Big\}
\Big].
\end{equation}

Note that the annealed survival probability $Z^\gamma_t$ is also given by $\E^X_0 [Z^\gamma_{t,X}].$
In \cite{DrGaRaSu-10}, the following asymptotics for the annealed survival probability have been derived.

\begin{theorem}{\rm\cite[Thm. 1.1]{DrGaRaSu-10}} \label{thm:annealedAsymptotics}
Assume that $\gamma\in (0,\infty]$, $\kappa\geq 0$, $\rho>0$ and $\nu>0$, then
\begin{align*}
  \E^X_0[Z^\gamma_{t,X}] = \left\{
 \begin{array}{ll}
\exp\Big\{-\nu \sqrt{\frac{8\rho t}{\pi}}(1+o(1))\Big\}, &  d=1, \\
&\\
\exp\Big\{-\nu\pi\rho \frac{t}{\ln t}(1+o(1))\Big\}, & d=2,  \\
&\\
\exp\Big\{-\lambda_{d,\gamma,\kappa,\rho, \nu}\, t(1+o(1))\Big\}, & d\geq 3,
\end{array}
\right.
\end{align*}
where $\lambda_{d,\gamma,\kappa,\rho, \nu}$ depends on $d$, $\gamma$, $\kappa$, $\rho$, $\nu$, and is called the annealed Lyapunov exponent. %Furthermore, $\lambda_{d,\gamma,\kappa,\rho, \nu}\geq \lambda_{d,\gamma,0,\rho, \nu}=\nu\gamma/(1+\frac{\gamma G_d(0)}{\rho})$, where $G_d(0):=\int_0^\infty p_t(0)\, {\rm d}t$ is the Green function of a simple symmetric random walk on $\Z^d$ with jump rate $1$ and transition kernel $p_t(\cdot)$.
\end{theorem}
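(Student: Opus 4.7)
My plan is to start from the Poisson--Laplace identity for the trap configuration. Since $\xi(\cdot,\cdot)$ arises from a Poisson system of independent random walks, conditioning on $X$ and integrating out the traps yields
\begin{equation*}
Z^\gamma_{t,X} = \exp\big(-\nu\,\Phi(t,X)\big), \qquad \Phi(t,X) := \sum_{y \in \Z^d}\E^Y_y\Big[1 - \exp\Big(-\gamma\int_0^t \ind{1}\{X_s = Y_s\}\,ds\Big)\Big].
\end{equation*}
In particular $\Phi(t,X) \leq \Phi_\infty(t,X) := \sum_y \P^Y_y(\exists s \leq t: Y_s = X_s)$, and the gap between $\Phi$ and $\Phi_\infty$ is negligible after a local-time cutoff because $1 - e^{-\gamma L} \geq (1 - e^{-\gamma\varepsilon})\ind{1}\{L \geq \varepsilon\}$. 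Re-indexing $y \mapsto Y_0$ turns the hard-killing functional into the expected range of a drifted walk,
\begin{equation*}
\Phi_\infty(t,X) = \E^Y_0\big[\#\{X_s - Y_s : 0 \leq s \leq t\}\big],
\end{equation*}
which is the central object to bound in $d=1,2$.

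For the lower bound in $d = 1$, I would restrict $X$ to the event $A_t := \{\sup_{s \leq t}|X_s| \leq L_t\}$ with, say, $L_t = t^{1/3}$. On $A_t$, subadditivity of the range gives $\Phi_\infty(t,X) \leq (2L_t+1) + R^Y_t$ pathwise in $Y$, and combining $\E[R^Y_t] \sim \sqrt{8\rho t/\pi}$ with $\P^X(A_t) \geq \exp(-O(\kappa t/L_t^2)) = \exp(-O(t^{1/3}))$ yields $\E^X_0[Z^\gamma_{t,X}] \geq \exp(-\nu\sqrt{8\rho t/\pi}(1+o(1)))$. The $d=2$ lower bound is analogous (with, say, $L_t = t^{1/4}$) using the asymptotic $\E[R^Y_t] \sim \pi\rho t/\log t$ for a two-dimensional walk with jump rate $\rho$.

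The matching upper bound is where I expect the real work to lie. A crude Jensen bound produces $\E^X_0[\Phi(t,X)] \sim \sqrt{8(\kappa+\rho)t/\pi}$ in $d=1$, which gives the wrong constant, so I would instead seek a lower bound $\Phi_\infty(t,X) \geq \sqrt{8\rho t/\pi}(1-o(1))$ valid with $X$-probability $1-o(1)$; together with $Z^\gamma_{t,X} \leq e^{-\nu\Phi_\infty(t,X)}$ and a cheap bound on the residual set of $X$-paths (whose $X$-cost is trivially $\leq 1$ but whose $X$-probability is beaten by $\exp(-\nu\sqrt{8\rho t/\pi})$), this gives the desired upper bound. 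The natural route is to split the sum defining $\Phi_\infty$ into traps starting in $[-M_t, M_t]$ with $M_t \gg \sqrt{\rho t}$ and traps starting outside; for each trap in the large interval, a local-CLT estimate shows that, uniformly over typical $X$-paths, the meeting probability $\P^Y_y(\exists s \leq t: Y_s = X_s)$ is close to the hitting probability $\P^Y_y(\exists s: Y_s = 0)$, so that summing recovers $\E[R^Y_t](1-o(1))$. The $d=2$ case proceeds by the same scheme using $\E[R^Y_t] \sim \pi\rho t/\log t$, but requires more delicate tracking of logarithmic corrections.

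For $d \geq 3$ traps are transient and the decay rate becomes linear. I would establish the existence of $\lambda_{d,\gamma,\kappa,\rho,\nu}$ via a Kingman-type subadditive ergodic argument: the quantities $-\log \E^X_0[Z^\gamma_{t,X}]$ are almost subadditive in $t$ under time-concatenation of $X$-paths combined with the space-time stationarity of the Poisson trap system, a large-box decoupling handling the weak non-independence of trap influences across disjoint time intervals. Finiteness of $\lambda$ follows from $\E^X_0[Z^\gamma_{t,X}] \geq e^{-\gamma\nu t}$ (Jensen plus $\E^\xi[\xi(s,x)] = \nu$), and positivity follows from the observation that along any $X$-trajectory the time-averaged trap density $\frac{1}{t}\int_0^t \xi(s,X_s)\,ds$ converges almost surely to a strictly positive constant by the ergodic theorem for the stationary Poisson-random-walk trap system.
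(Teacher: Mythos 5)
Your Poisson--Laplace identity and the re-indexing $\Phi_\infty(t,X)=\E^Y_0\big[\,\big|\{X_s-Y_s:s\leq t\}\big|\,\big]$ are correct, and your lower-bound strategy (confine $X$ to a box of size $t^{1/3}$ in $d=1$, pay $O(t^{1/3})$ in $\P^X$-cost and $O(t^{1/3})$ for the extra range, keep the $\sqrt{t}$ term from $\E[R^Y_t]$) is essentially what is done in \cite{DrGaRaSu-10} and sketched in Section~\ref{subsec:Previous} of this paper, via the ball-clearing events $E_t,F_t,G_t$. That part is fine.

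The upper bound, however, has a genuine gap, and it is the crux of the theorem. First, the inequality $Z^\gamma_{t,X}\leq e^{-\nu\Phi_\infty(t,X)}$ is backwards for $\gamma<\infty$: since $\Phi\leq\Phi_\infty$, you have $Z^\gamma_{t,X}=e^{-\nu\Phi(t,X)}\geq e^{-\nu\Phi_\infty(t,X)}$, so a lower bound on $\Phi_\infty$ does not by itself control $Z^\gamma$ from above. Second, and more seriously, your plan to prove $\Phi_\infty(t,X)\geq\sqrt{8\rho t/\pi}(1-o(1))$ only on a set of $X$-paths of probability $1-o(1)$, and to dispose of the residual set by the trivial bound $Z^\gamma\leq 1$, cannot close: the residual set has $X$-probability $o(1)$, which is enormously larger than the target $e^{-\nu\sqrt{8\rho t/\pi}}$, so its contribution to $\E^X_0[Z^\gamma_{t,X}]$ swamps the estimate. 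What is actually needed is a lower bound on $\Phi(t,X)$ that holds \emph{deterministically in $X$}. This is exactly Pascal's principle (\cite[Prop.~2.1]{DrGaRaSu-10}, and \eqref{eq:rangeDiff} in this paper): for $Y$ symmetric and any $Y$-independent path $X$, $\E^{Y}_0\big[\,|{\rm Range}_{s\leq t}(Y_s+X_s)|\,\big]\geq\E^{Y}_0\big[\,|{\rm Range}_{s\leq t}(Y_s)|\,\big]$, and its soft-range version for $\gamma<\infty$. With that in hand, $\Phi(t,X)\geq\E^{Y,N}_0[|{\rm SoftRange}(Y)|]=\E^Y_0[|{\rm Range}(Y)|]-\E^Y_0[F^\gamma_t(Y)]$, and the correction $\E^Y_0[F^\gamma_t(Y)]$ is $O(\log t)$ by the thin-points estimate (Proposition~\ref{prop:expMomLocal}), which is negligible against $\sqrt t$ (resp.\ $t/\log t$). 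Your local-CLT comparison is not needed and would not give a uniform-in-$X$ bound.

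For $d\geq 3$ the subadditivity sketch for existence of $\lambda$ is plausible, but the positivity argument does not close: the a.s.\ convergence $\tfrac1t\int_0^t\xi(s,X_s)\,\mathrm{d}s\to\nu$ controls the quenched behavior, not the annealed moment generating function, and in $d=1,2$ this same time-average converges to $\nu>0$ yet $\lambda=0$. Positivity of $\lambda$ for $d\geq 3$ again relies on the Pascal-principle bound $\Phi_\infty(t,X)\geq\E^Y_0[|{\rm Range}_{s\leq t}(Y_s)|]\sim c_d t$ for transient $Y$, valid pathwise in $X$.

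\end{document}
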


\begin{remark}{\rm
The annealed and quenched survival probabilities introduced above are closely related to  the parabolic Anderson model, namely, the solution of the lattice stochastic heat equation with a random potential $\xi$:
\begin{align*}
\left\{ \begin{array}{rll}
\frac{\partial}{\partial t} u(t, x) &= \kappa \Delta u(t, x) - \gamma\, \xi(t, x)\, u(t, x),
&(t,x) \in [0,\infty) \times \Z,\\
u(0,x) \hspace{-,5em}&= 1, & x \in \Z.
\end{array} \right.
\end{align*}
See \cite{DrGaRaSu-10} for more details.
}
\end{remark}

It is natural to ask how the asymptotics in Theorem
\ref{thm:annealedAsymptotics} are actually achieved, both in terms of
the behavior of $X$ as well as that of $\xi.$ We consider the case
$d=1$ and investigate the typical behavior of $X$ conditioned on
survival. In the next section we state the model precisely and the
main results of the paper.

\subsection{Main Results} \label{mr}

 We shall consider the model considered in \cite{DrGaRaSu-10} but will
 allow the following generalisations:
\begin{eqnarray}
&&\mbox{$X$ is a continuous time
 random walk on $\Z$ with jump rate $\kappa>0$, and possess a  jump kernel} \nonumber \\ && \mbox{ $p_X$ which is non-degenerate with zero mean.} \label{grwx}\\
&& \nonumber\\
&&\mbox{$Y$-particles (traps) are independent continuous time random walks on $\Z$ with jump rate } \nonumber \\ &&\mbox{$\rho>0$, whose jump kernel $p_Y$ is symmetric. } \label{grwy}
\end{eqnarray}
As defined earlier, $\xi$ is as in (\ref{xidef}) and we shall assume
that the interaction parameter $\gamma\in (0,\infty]$ and the trap
  intensity $\nu>0$.

Before stating our results, we introduce some notation. For $t \in
(0,\infty)$ and a c\`adl\`ag function $f \in D([0,t], \R)$ (with
$D([0,t], \R)$ denoting the Skorokhod space), we define its supremum
norm by
\begin{equation}
\Vert f \Vert_t := \sup_{x \in [0,t]} \vert f(x) \vert.
\end{equation}

\subsubsection{Sub-diffusivity of $X$}

We are interested in the (non-consistent) family of Gibbs measures
\begin{equation} \label{eq:Gibbs}
P_{t}^\gamma ( X\in \cdot ) := \frac{\E_0^X \Big[ \E^\xi  \Big[ \exp \Big \{ -\gamma \int_0^t \xi(s,X_s) \, {\rm d}s \Big\} \Big] \ind{1}_{X \in \cdot} \Big] }
{  \E^X_0[Z^\gamma_{t,X}]}, \quad t \ge 0,
\end{equation}
on the space of c\`adl\`ag paths on $Z$. We will bound typical
fluctuations of $X$ with respect to $P_{t}^{\gamma}$. Our primary result
is the following bound on the fluctuation of $X$ conditioned on
survival up to time $t$.

\begin{theorem} \label{thm:main}
Let $X$ and $Y$ be as  (\ref{grwx}) and (\ref{grwy}) respectively. Assume that $\exists \lambda_* >0 $ such that 
\begin{equation} \label{expmom}\sum_{x\in\Z} e^{\lambda_* |x|} p_X(x)<\infty \mbox{ and
}\sum_{x\in\Z} e^{\lambda_* |x|} p_Y(x)<\infty.
\end{equation}  Then there exists $\alpha>0$ such that for all
  $\epsilon>0$,
\begin{equation}\label{eq:flucbd}
P_{t}^{\gamma} \Big( \Vert X \Vert_t \in \big(\alpha t^{\frac13} ,\, t^{\frac{11}{24}+\epsilon}\big) \Big) \asto{t} 1.
\end{equation}
\end{theorem}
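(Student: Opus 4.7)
The plan is to analyze the Gibbs ratio $P_t^\gamma(\|X\|_t \in A)$ after integrating out the Poisson trap field. By the Laplace functional one obtains
$$
Z_{t,X}^\gamma = \exp\big\{-\nu\,\Psi_t(X)\big\},\qquad \Psi_t(X) := \sum_{y\in\Z}\Big(1 - \E^Y_y\big[e^{-\gamma L^{X,Y}_t}\big]\Big),
$$
where $L^{X,Y}_t = \int_0^t \mathds{1}\{X_s = Y_s\}\,ds$ is the collision local time. For $\gamma = \infty$, the change of variables $y \mapsto X_s - Y^0_s$ (with $Y^0$ the $Y$-walk started from $0$) identifies $\Psi_t(X)$ with $\E^{Y^0}|\mathrm{Range}_{[0,t]}(X - Y^0)|$, the expected range of the difference walk. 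Theorem \ref{thm:annealedAsymptotics} pins down the denominator $\E^X_0[Z_{t,X}^\gamma] = e^{-\nu\sqrt{8\rho t/\pi}(1+o(1))}$, reducing the problem to estimating the two tail numerators.

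For the lower bound $\|X\|_t \ge \alpha t^{1/3}$, I would use the range triangle inequality $|\mathrm{Range}(X-Y^0)| \ge |\mathrm{Range}(Y^0)| - 2\|X\|_t$ to obtain $\Psi_t(X) \ge \sqrt{8\rho t/\pi}(1+o(1)) - 2\|X\|_t$. On the confinement event $\{\|X\|_t < M\}$ this gives $Z_{t,X}^\gamma \le \exp(-\nu\sqrt{8\rho t/\pi}(1+o(1)) + 2\nu M)$, and the classical random walk confinement estimate yields $\P^X_0(\|X\|_t < M) \le e^{-c_1 t/M^2}$ for $M \ll \sqrt{t}$. Choosing $M = \alpha t^{1/3}$, the ratio of numerator to denominator is bounded by $\exp(t^{1/3}(2\nu\alpha - c_1/\alpha^2) + \sqrt{t}\cdot o(1))$, which tends to $0$ for $\alpha$ small enough, provided the subleading correction in Theorem \ref{thm:annealedAsymptotics} is genuinely $o(t^{1/3})$. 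I would secure this sharpness by revisiting the matching lower-bound construction in \cite{DrGaRaSu-10}, which explicitly uses a localization strategy at scale $t^{1/3}$.

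The upper bound $\|X\|_t \le t^{11/24+\epsilon}$ is substantially harder. The naive range-based lower bound on $\Psi_t(X)$ \emph{decreases} in $\|X\|_t$ (wrong sign), so the contribution from large $\|X\|_t$ cannot be suppressed this way. A genuine strengthening is needed: one must argue that if $\|X\|_t = M$ is large, then $\Psi_t(X)$ cannot simultaneously be small, unless the trajectory is unusually efficient—with many sites visited for very little time (\emph{thin points}) and many unvisited gaps in its range (\emph{holes}). Bounding the number of thin points and the total volume of holes from above, precisely the two auxiliary results flagged in the abstract, quantifies how much such efficiency is statistically available. Feeding these bounds into a dyadic decomposition of $\{\|X\|_t > t^{11/24+\epsilon}\}$, combined with the random walk tail $\P^X_0(\|X\|_t > M) \le e^{-cM^2/t}$ for $M \gtrsim \sqrt{t}$ (and a direct argument for intermediate $M$), the exponent $11/24 = 1/2 - 1/24$ arises from optimizing the trade-off between the statistical cost of atypical paths and the geometric gain from holes and thin points.

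The main obstacle, by a wide margin, is the upper bound: proving the thin-points and holes estimates for a one-dimensional random walk, and weaving them into a lower bound on $\Psi_t(X)$ strong enough to beat the leading $\sqrt{t}$ asymptotic by a nontrivial power of $t$. The resulting exponent $11/24$ is almost certainly not sharp—the conjectural true fluctuation scale is $t^{1/3}$, matching the lower bound—but it is what this particular strategy affords.
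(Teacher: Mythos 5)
Your outline captures the correct reduction to ranges of random walks, and you have correctly identified that the thin-points and holes bounds from the abstract will be needed somewhere. However, there are two genuine gaps, one in each direction.

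\textbf{Lower bound.} You propose to control the ratio by comparing the confinement cost $\P^X_0(\Vert X\Vert_t<M)\le e^{-c_1 t/M^2}$ against the bound $Z^\gamma_{t,X}\le\exp\{-\nu\sqrt{8\rho t/\pi}(1+o(1))+2\nu M\}$ and the denominator $\exp\{-\nu\sqrt{8\rho t/\pi}(1+o(1))\}$. The difficulty you flag yourself is fatal: the $o(1)$ correction in the exponent is only known to be $o(\sqrt t)$, not $o(t^{1/3})$, and the two $o(1)$'s (numerator and denominator) need not cancel. So the term $\sqrt t\cdot o(1)$ can dominate your $t^{1/3}$ budget entirely. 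The paper avoids the $1+o(1)$ asymptotics of Theorem \ref{thm:annealedAsymptotics} altogether. The normalizing constant is lower-bounded by the explicit clearing strategy $\P(E_t)\P(F_t)\P(G_t)\ge e^{-c_1 t^{1/3}}\exp\{-\nu(\E^Y_0[|{\rm Range}_{s\in[0,t]}(Y_s)|]+C\ln t)\}$ (this is where Theorem \ref{thm:range} on holes enters), and the numerator is controlled by Pascal's principle, which gives the pathwise (not merely asymptotic) inequality $\E^{Y,N}_0[|{\rm SoftRange}(Y+X)|]\ge\E^{Y,N}_0[|{\rm SoftRange}(Y)|]$ for every realization of $X$. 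The gap between ${\rm Range}$ and ${\rm SoftRange}$ is exactly $F_t^\gamma(Y)$, which is $O(\ln t)$ in expectation by the thin-points estimate. All error terms are explicit and $O(\ln t)$ or $O(t^{1/3})$; the $\sqrt t$ scale never appears in the comparison. Your range-triangle inequality $|{\rm Range}(X-Y^0)|\ge|{\rm Range}(Y^0)|-2\Vert X\Vert_t$ is both weaker than Pascal and restricted to nearest-neighbor walks.

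\textbf{Upper bound.} You attribute the exponent $11/24$ to an optimization between the statistical cost of atypical paths and the geometric gain from thin points and holes. This is not where the exponent comes from. Thin points and holes contribute only $O(\ln t)$ corrections; they serve merely to pass from ${\rm SoftRange}$ to ${\rm Range}$ (when $\gamma<\infty$) and from the span $\sup-\inf$ to the actual range cardinality (when the walk is not simple). The exponent $11/24=1/3+1/8$ arises instead from a direct lower bound on the deterministic functional $X\mapsto\E^Y_0[|{\rm Range}(Y+X)|]-\E^Y_0[|{\rm Range}(Y)|]$: on $\{\Vert X\Vert_t\ge t^{11/24+\epsilon}\}$ one localizes near the argmax $\sigma(Y)$ of $Y$ and shows the difference is at least $(t^{11/24+\epsilon}/2-t^{11/24})\,\P^Y_0(Y_{\sigma(Y)}-Y_{\sigma(X)}\le t^{11/24},\ \sigma(Y)\in S)$; the probability is bounded below by $Ct^{-1/8}$ via random-walk estimates around the argmax (Brownian-meander/Bessel-3 type, cf.\ \eqref{eq:besselBd}), giving a gain of order $t^{1/3+\epsilon}$, which beats the $e^{c_1 t^{1/3}}$ prefactor. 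No dyadic decomposition over $\Vert X\Vert_t$ and no scarcity-of-thin-points argument is used. You would need to produce this argmax analysis — it is the heart of the upper bound, and your current sketch does not contain it.
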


\begin{remark}{\rm Since $\frac{11}{24}<\frac{1}{2}$, the above result shows that $X$ is
sub-diffusive under $P^\gamma_t$. We believe that $X$ in fact
fluctuates on the scale of $t^{1/3}$ (modulo lower order
corrections). Interestingly, this would coincide with the fluctuation
known for the case of immobile traps in dimension one (see
e.g.\ \cite{S90, S03}), and we conjecture that even the rescaled path
converges to the same limit. We also note that this should happen even
though the annealed survival probability decays at a different rate
when the traps are mobile. However, the mobile trap case presents
fundamental difficulties that are not present in the immobile case.  }
\end{remark}

\subsubsection{Thin points of $X$}

As by-product of our analysis, we obtain bounds on the number of
thin-points of a one-dimensional random walk which is of independent
interest. Let $X$ be as in (\ref{grwx}) and
\begin{equation}\label{loct}
L_t(x) := L_t^{X}(x) := \int_0^t \delta_{x}(X_s) \, {\rm d}s
\end{equation}
denote the local time of the random walk $X$ at $x$ up to time $t$.
Typically, for $x\in \Z$ in the bulk of the range of $X$, the local
time $L_t^X(x)$ will be of order $\sqrt t.$ We are interested in {\em
  thin points}. More precisely, for $M > 0$, a point $x$ in the range
of $X$ is called \lq$M$-thin at time $t$\rq\ if $L_t(x) \in(0, M]$,
  and we denote by
\begin{equation}
\mathcal T_{t, M}  := \{ x \in \Z \, : \, L_t(x) \in (0, M]\}
\end{equation}
the set of $M$-thin points at time $t$.

For $\gamma > 0$, we introduce the local time functional
\begin{equation} \label{eq:localTimeFunc}
F_t^{\gamma}(X) := \sum_{x \in \Z} e^{-\gamma L_t^{X}(x)}
\ind{1}_{L_t^X(x) > 0}.
\end{equation}
For $X$ with mean zero and finite variance for its increments, Proposition \ref{prop:expMomLocal} below implies the existence of constants $c(\gamma), C(\gamma) \in (0,\infty)$ such that for all $t \in (0,\infty),$
\begin{equation}
\E_0^X \Big[ \exp \Big\{ \frac{c(\gamma)}{1 \vee \ln t} F_t^{\gamma}(X)\Big\} \Big] \le C(\gamma).
\end{equation}
Since $F_t^\gamma(X)\geq e^{-\gamma M} \vert \mathcal T_{t, M} \vert$, we immediately obtain the following result
\begin{theorem} \label{thm:thinPoints}
Let $\gamma \in (0,\infty)$, and let $X$ be as in (\ref{grwx}). Assume that 
$$\sum_{x\in\Z} x^2 p_X(x)<\infty. $$ Then, for any positive $M$,
\begin{equation}
\P \big ( \vert \mathcal T_{t, M}\vert \geq a) \leq C(\gamma) e^{-\frac{c(\gamma)e^{-\gamma M}}{1\vee \ln t} a} \qquad \mbox{for all } a>0.
\end{equation}
\end{theorem}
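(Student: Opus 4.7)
The proof is essentially a one-line consequence of the exponential moment bound for $F_t^\gamma(X)$ supplied by Proposition \ref{prop:expMomLocal}, combined with an exponential Markov inequality. The plan has three short steps, which I outline below.

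First, I would record the key domination that was already flagged in the text: for every $x\in\mathcal T_{t,M}$ the local time satisfies $0<L_t^X(x)\le M$, hence $e^{-\gamma L_t^X(x)}\ge e^{-\gamma M}$. Summing only over these sites in the definition \eqref{eq:localTimeFunc} yields
\begin{equation*}
F_t^\gamma(X) \;=\; \sum_{x\in\Z} e^{-\gamma L_t^X(x)}\ind{1}_{L_t^X(x)>0} \;\ge\; \sum_{x\in\mathcal T_{t,M}} e^{-\gamma M} \;=\; e^{-\gamma M}\,|\mathcal T_{t,M}|.
\end{equation*}

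Second, I would invoke Proposition \ref{prop:expMomLocal}, whose hypotheses (mean zero and finite variance of $p_X$) are exactly what the theorem assumes, to obtain constants $c(\gamma),C(\gamma)\in(0,\infty)$ with
\begin{equation*}
\E_0^X\Big[\exp\Big\{\tfrac{c(\gamma)}{1\vee \ln t}\,F_t^\gamma(X)\Big\}\Big]\le C(\gamma),\qquad t\in(0,\infty).
\end{equation*}
Third, combine the two: for any $a>0$, using monotonicity of the exponential and the above bound,
\begin{equation*}
\P\big(|\mathcal T_{t,M}|\ge a\big) \;\le\; \P\Big(F_t^\gamma(X)\ge e^{-\gamma M}a\Big) \;\le\; \exp\Big\{-\tfrac{c(\gamma)e^{-\gamma M}}{1\vee \ln t}\,a\Big\}\cdot \E_0^X\Big[\exp\Big\{\tfrac{c(\gamma)}{1\vee \ln t}F_t^\gamma(X)\Big\}\Big],
\end{equation*}
which is at most $C(\gamma)\exp\{-\tfrac{c(\gamma)e^{-\gamma M}}{1\vee \ln t}\,a\}$, giving the claim.

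There is essentially no real obstacle at this stage, since all the work is done by Proposition \ref{prop:expMomLocal}; the genuine content of the theorem is the exponential moment bound for $F_t^\gamma(X)$, whose proof (carried out separately) is what requires the mean-zero, finite-variance assumption on $p_X$ and leads to the $(1\vee \ln t)^{-1}$ prefactor in the exponent. Once that proposition is in hand, the thin-point bound follows purely from the pointwise domination $F_t^\gamma(X)\ge e^{-\gamma M}|\mathcal T_{t,M}|$ and a Chernoff-type estimate.
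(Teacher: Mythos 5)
Your proof is correct and follows the paper's intended argument exactly: the pointwise bound $F_t^\gamma(X)\ge e^{-\gamma M}|\mathcal T_{t,M}|$ plus Proposition \ref{prop:expMomLocal} and a Chernoff bound is precisely the derivation the authors flag just before stating the theorem. Nothing to add.
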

\begin{remark}{\rm
Thin points of Brownian motion in dimension $d\geq 2$ have been
studied in \cite{DePeRoZe-00} using L\'evy's modulus of
continuity. Dimension $1$ is different and could be analyzed by using
the Ray-Knight theorem. When $X$ is a simple random walk on $\Z$,
there is still a Ray-Knight theorem to aid our analysis. But for a
general random walk $X$ as in Theorem \ref{thm:thinPoints}, this
approach fails.  }
\end{remark}

\subsubsection{Holes in the range of $X$}
For a simple random walk $X$ on $\Z$, its range equals the interval $[\inf_{0\leq s\leq t}X_s, \sup_{0\leq s\leq t} X_s]$, which is no longer true for non-simple random walks. However, for  $X$ as in (\ref{grwx}), we can control the difference
\begin{equation}\label{GtX}
G_t(X) := \big(\sup_{0\leq s\leq t} X_s - \inf_{0\leq s\leq t} X_s\big) - |{\rm Range}_{s\in [0,t]}(X_s)|=
\sum_{\inf_{s\in [0,t]}X_s <x< \sup_{s\in [0,t]} X_s} \ind{1}_{L^X_t(x)=0}.
\end{equation}
This is the total volume of the holes in the range of $X$ by time $t$, which will appear in the proof of Theorem \ref{thm:main} for non-simple random walks.

\begin{theorem} \label{thm:range}
Let $X$ be as in (\ref{grwx}). Assume that $\exists \lambda_* >0 $ such that 
\begin{equation} \label{expmomy} \sum_{x\in\Z} e^{\lambda_* |x|} p_X(x)<\infty.\end{equation}Then there exist $c, C>0$ such that for $\lambda_t := \frac{c}{1 \vee \ln t}$, we have
\begin{equation}\label{1.12}
\E_0^X \big[ \exp \{ \lambda_t G_t(X)\} \big] \le C \qquad \mbox{for all } t \in (0,\infty).
\end{equation}
\end{theorem}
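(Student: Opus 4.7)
The plan is to relate $G_t$ to the skip events created by the jumps of $X$ and to control them using the exponential moments of $p_X$ together with a mean-zero random-walk hitting-time estimate. Enumerating the jumps as $(s_i, J_i)_{i \le N_t}$, each jump with $|J_i|\ge 2$ ``skips over'' the $|J_i|-1$ integer sites strictly between $X_{s_i-}$ and $X_{s_i}$; a site $x$ is a hole at time $t$ iff it lies in one such set and is never revisited during $[0,t]$. Hence
$$G_t \;\le\; \sum_{i=1}^{N_t}\;\sum_{x\colon X_{s_i-}\wedge X_{s_i}\,<\, x\, <\, X_{s_i-}\vee X_{s_i}}\ind{1}_{L_t^X(x)=0},$$
and the problem reduces to bounding the conditional probability that a given skipped site is never revisited.

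The key analytic ingredient is the classical hitting-time estimate
$$\sup_{y\in\Z}\P_y^X(\tau_0 > u) \;\le\; C\min\!\Bigl(1,\,\tfrac{1+|y|}{\sqrt u}\Bigr),\qquad u > 0,$$
valid for our mean-zero walks by the invariance principle and the Brownian hitting-time density, with (\ref{expmomy}) controlling the overshoot contributions. Strong Markov at $s_i$, combined with $|x-X_{s_i}|\le|J_i|$ for every skipped $x$, gives $\P(L_t^X(x)=0\mid\mathcal F_{s_i}) \le C\min(1,|J_i|/\sqrt{t-s_i})$. To avoid double-counting the same hole across several skip events I would reorganise the sum along the ascending (and, symmetrically, descending) ladder times $\tau_k$ of $X$ with heights $h_k:=M_k-M_{k-1}$, noting that sites in $(M_{k-1},M_k)$ are untouched before $\tau_k$; strong Markov at $\tau_k$ then yields
$$\E\bigl[\#\{\text{holes in }(M_{k-1},M_k)\}\bigm|\mathcal F_{\tau_k}\bigr] \;\le\; C\min\!\Bigl(h_k,\,\tfrac{h_k^2}{\sqrt{t-\tau_k}}\Bigr).$$

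Taking expectations and using the iid structure of the pairs $(\Delta\tau_k,h_k)_{k\ge1}$ (by strong Markov at successive $\tau_{k-1}$) plus the arcsine-type bound $\sum_k \E[1/\sqrt{t-\tau_k}] \asymp \int_0^t ds/\sqrt{s(t-s)} = \pi$ (consistent with $\tau_k\sim k^2$ from the invariance principle), together with the uniform exponential moments of $h_k$ from (\ref{expmomy}), yields $\E[G_t]\le C$. To upgrade this to the MGF bound, I would expand $\E[e^{\lambda G_t}]=\sum_{n\ge0}\lambda^n\E[G_t^n]/n!$ and bound $\E[G_t^n]$ by iterating strong Markov at the successive first-passage times through $x_1 < \cdots < x_n$: each crossing contributes a factor $C/\sqrt{t-\tau_{x_j}^{>}}$ to the joint probability, and the $n$-fold ordered integral $\int_{0<s_1<\cdots<s_n<t}\prod_j ds_j/\sqrt{(t-s_j)s_j}$ supplies the $n!$ needed to cancel the Taylor denominator. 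The logarithmic scale of $\lambda_t=c/(1\vee\ln t)$ ultimately traces to the cost of excluding rare jumps of size $\asymp\ln t$ (by (\ref{expmomy}), of probability $O(1/t)$ per jump) that land near the terminal time and leave unfilled holes.

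The main obstacle is the moment bound $\E[G_t^n]\le (C\ln t)^n n!$ in the last step, because the hole events are positively correlated through the shared trajectory of $X$. A parallel approach I would attempt is to deduce Theorem \ref{thm:range} from Proposition \ref{prop:expMomLocal} directly: construct an enhanced walk $\tilde X$ that inserts an instantaneous excursion of negligible duration $\delta\downarrow 0$ through each site skipped by $X$, so that every hole of $X$ becomes a ``very thin'' point of $\tilde X$ with $L_t^{\tilde X}(x)=O(\delta)$; then $G_t(X)\le(1+o(1))\,F_t^\gamma(\tilde X)$ as $\delta\to 0$, and the exponential-moment bound on $F_t^\gamma$ would transfer, provided the proof of Proposition \ref{prop:expMomLocal} can be adapted to accommodate this modification.
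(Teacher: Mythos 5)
Your high-level plan overlaps with the paper's strategy: expand the moment generating function, decompose holes via the first time the walk crosses each level, use a one-dimensional first-passage estimate $\P^X_z(\tau_0\ge u)\lesssim 1\wedge|z|/\sqrt{u}$, and control the combinatorics so that each order in $\lambda$ is summable. But the sketch has a genuine gap exactly where you flag it, and the formula you write down at that step is incorrect, not merely incomplete.

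The crucial issue is the joint structure of the $n$ hole events. Writing $s_1<\dots<s_n$ for the first crossing times of $x_1<\dots<x_n$, and applying the Markov property at $s_n$ and then time-reversal from $s_n$ back to an intermediate time, the correct bound on $\sum_{x_1,\dots,x_n}\P(\text{all holes})$ has the form $\phi(s_1)\,\phi^2(s_2-s_1)\cdots\phi^2(s_n-s_{n-1})\,\phi(t-s_n)$ with $\phi(u)=1\wedge u^{-1/2}$ — the intermediate increments enter squared (one power from the forward walk, one from the time-reversed walk), and the relevant time variables are the \emph{increments} $s_i-s_{i-1}$, not $s_i$ and $t-s_i$ separately. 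Since $\phi^2(u)=1\wedge u^{-1}$ is not integrable without a logarithmic cut-off, each intermediate integration produces a factor of $\ln t$, yielding $(\ln t)^{n-1}$ in the $n$-th moment. Your proposed integral $\int_{0<s_1<\cdots<s_n<t}\prod_j ds_j/\sqrt{s_j(t-s_j)}$ evaluates to $\pi^n/n!$ and would imply $\E^X_0[e^{\lambda G_t}]\le C$ for a fixed $\lambda$ independent of $t$ — a strictly stronger conclusion than the theorem, and one the argument does not actually deliver. The logarithm in $\lambda_t = c/(1\vee\ln t)$ is precisely what absorbs these $(\ln t)^{n-1}$ factors, and your explanation of its origin (rare jumps of size $\asymp\ln t$) does not reflect where it actually arises in the proof.

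A second ingredient you omit is the multiplicity of holes per jump. A single jump of $X$ of size $|J|$ creates up to $|J|-1$ skipped sites simultaneously, so several of the $x_j$ in the $n$-th moment can share the same crossing time. The paper sums over ordered partitions $I_1,\dots,I_m\vdash\{1,\dots,k\}$ (holes grouped by their common creation time), and this is the step that brings in the moments $M(|I_i|+2)=\sum_x|x|^{|I_i|+2}p_X(x)$ of the jump kernel and hence the exponential-moment hypothesis \eqref{expmomy}; without tracking this combinatorics, one cannot make rigorous how the factorial in the Taylor expansion is cancelled while keeping the resulting series summable for small $c$. Your ladder-time reorganisation is a reasonable idea for the first moment, but the ladder-epoch increments have infinite mean for a zero-mean walk, so the ``i.i.d.\ plus arcsine'' argument for $\E[G_t]\le C$ needs more care (the paper only proves $\E[G_t]\lesssim\ln t$). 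The alternative reduction you suggest — embedding holes as artificial ``thin points'' of an enhanced walk $\tilde X$ — is a nice heuristic, but $\tilde X$ is no longer a continuous-time random walk with a single jump rate and kernel, so the proof of Proposition \ref{prop:expMomLocal} does not transfer without essentially redoing the work; and indeed the paper does not reduce Theorem \ref{thm:range} to Proposition \ref{prop:expMomLocal}, but rather repeats the proof technique (MGF expansion, iterated Markov property at crossing times, time reversal, convolution of $\phi^2$) with the additional partition bookkeeping described above.
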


As a consequence of \eqref{1.12}, we have
\begin{equation}\label{EGt}
\E^X_0[G_t(X)] = \int_0^\infty \P^X_0(G_t(X) \geq m)\, {\rm d}m \le \int_0^\infty C e^{-\frac{m c}{1\vee \ln t}} {\rm d} m \le C\ln t.
\end{equation}
\begin{remark}{\rm
We note that \eqref{1.12} cannot hold if $\sum_{|x|>L}p_X(x)$ has power law decay. This is easily seen by considering the strategy that the random walk makes a single jump from $0$ to a position $x\geq t$, and then never falls below $x$ before time $t$. The probability of this strategy decays polynomially in $t$, while the gain $e^{\lambda_t G_t(X)}$ is more than the stretched exponential.
}
\end{remark}

Throughout the paper, $c$ and $C$ will denote generic constants,
whose values may change from line to line. Indexed constants such as
$c_1$ and $C_2$ will denote values that will be fixed from their first
occurrence onwards. In order to emphasise dependence of a constant on
a parameter, we will write $C(p)$ for instance.

{\bf Layout :} The rest of the paper is organised as follows. In Section \ref{pthin} we prove Theorem \ref{thm:thinPoints}, in Section
\ref{prange} we prove Theorem \ref{thm:range}. We conclude the paper
with Section \ref{pmain} where  we prove Theorem \ref{thm:main}.

\section{Proof of Theorem \ref{thm:thinPoints}} \label{pthin}
Recall from \eqref{eq:localTimeFunc} that $F_t^{\gamma}(X) := \sum_{x \in \Z} e^{-\gamma L_t^{X}(x)} \ind{1}_{L_t^X(x) > 0}$.
As remarked before the statement of Theorem \ref{thm:thinPoints}, it suffices to establish the following result
\begin{proposition} \label{prop:expMomLocal}
Let $X$ be a random walk satisfying the assumptions of Theorem \ref{thm:thinPoints}. Then for each $\gamma > 0$, there exist constants $c(\gamma), C(\gamma)\in (0,\infty)$, such that for $\lambda_t := \frac{c(\gamma)}{1 \vee \ln t}$, we have
\begin{equation}\label{prop3.1}
\E_0^X \big[ \exp \{ \lambda_t F_t^{\gamma}(X)\} \big] \le C(\gamma) \qquad \mbox{for all } t \in (0,\infty).
\end{equation}
\end{proposition}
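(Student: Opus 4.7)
My plan is to peel off sites with large local time (which contribute negligibly) and reduce the problem to a factorial-type moment bound on the number of low-local-time sites, then establish that moment bound by iterated strong Markov.

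\emph{Thick/thin split.} Set $M_t := c_0 (1 \vee \ln t)/\gamma$ for a constant $c_0=c_0(\gamma)$ to be chosen large. Any site with $L_t(x) > M_t$ contributes at most $e^{-\gamma M_t} = t^{-c_0}$ to $F_t^\gamma(X)$, so
\[F_t^\gamma(X) \;\leq\; |\mathcal T_{t, M_t}|\;+\;t^{-c_0}\big(1 + \sup_{s \leq t} X_s - \inf_{s \leq t} X_s\big).\]
Under the finite-variance hypothesis, $1 + \sup X_s - \inf X_s$ has $L^p$ norms of order $\sqrt t$ for every $p$, so taking $c_0>1/2$ makes the second summand vanish in $L^p$ for every $p$ as $t\to\infty$, and in particular its exponential moment at scale $\lambda_t$ is $1+o(1)$. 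It thus suffices to bound $\E_0^X\!\left[\exp(\lambda_t |\mathcal T_{t, M_t}|)\right]$.

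\emph{Factorial-log moment bound.} Expanding the exponential reduces the task to
\begin{equation} \label{eq:mombd}
\E_0^X[|\mathcal T_{t, M_t}|^k] \;\leq\; k!\,(C\ln t)^k \quad\text{for all } k \geq 1,
\end{equation}
with some $C=C(\gamma)$ uniform in $k$ and $t$: with $\lambda_t = c(\gamma)/(1 \vee \ln t)$ and $c(\gamma)\cdot C < 1$, the Taylor expansion of the exponential then sums to a constant. To prove \eqref{eq:mombd}, expand
\[\E_0^X[|\mathcal T_{t, M_t}|^k] \;=\; \sum_{x_1,\dots,x_k}\P_0^X\!\big(0<L_t(x_i)\le M_t\text{ for all }i\big),\]
group the tuples by the ordering of the first-visit times $\tau_{x_1}<\cdots<\tau_{x_k}$ (contributing a factor $k!$), and apply the strong Markov property at each $\tau_{x_i}$ after replacing $\{L_t(x_i)\le M_t\}$ by the weaker, Markov-amenable event $\{L_{\tau_{x_{i+1}}}(x_i)\le M_t\}$. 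This should yield a product of $k$ single-site factors, each of order
\[\sum_x\int_0^t \P_0^X\!\big(\tau_x\in du,\,L_u(0)\le M_t\big)\;\le\;C\, M_t\;\lesssim\;\ln t,\]
using the single-point estimate $\P_0^X(L_u(0)\le M)\lesssim M/\sqrt{u}$ from the local CLT together with the growth rate $\frac{d}{du}\E_0^X|\mathrm{Range}_u| \sim c/\sqrt{u}$ for finite-variance $X$.

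\emph{Main obstacle.} The hardest point is the strong-Markov factorisation. The event $\{L_t(x_i)\le M_t\}$ depends on the entire path up to time $t$ and interacts with later visits to the other $x_j$'s; replacing it by $\{L_{\tau_{x_{i+1}}}(x_i)\le M_t\}$ is valid as an upper bound, but verifying that the resulting single-step sum still carries the local-CLT bound $O(\ln t)$ (rather than the trivial $O(\sqrt{t})$) requires handling the positive correlation between first-visit times and the local-time constraint with care, likely via an FKG-type argument or by working with the jump skeleton of $X$ directly. For the simple random walk one can cross-check via the Ray-Knight description of the local-time profile as a squared Bessel process, but this approach does not immediately extend to the general random walks allowed in the proposition.
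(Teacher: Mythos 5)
Your overall architecture -- expand into moments, group by first-visit times, iterate the strong Markov property to manufacture a $(\ln t)^k$ bound -- is the right one, and you have correctly identified the strong-Markov factorisation as the sticking point. But there are two genuine gaps, one of which you have under-weighted, and the paper sidesteps both by working with $e^{-\gamma L_t(x)}$ directly rather than with the thin/thick split.

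First, the thick/thin split as written does not close. You bound the number of thick sites by $1+\sup_{s\le t}X_s-\inf_{s\le t}X_s$, but the proposition assumes only $\sum_x x^2 p_X(x)<\infty$, and under finite variance alone $\sup X -\inf X$ need not have \emph{any} exponential moments (a single heavy jump can dominate). The fix is to bound the thick sites by $\bigl|{\rm Range}_{[0,t]}(X)\bigr|$, which is at most the number of jumps and therefore has Poissonian tails; this is in fact exactly the observation the paper makes (at \eqref{34}) to get uniform integrability in its discrete-time approximation Lemma \ref{lem:discreteTimeApprox}. More to the point, though, the paper never splits: it works with $\E_0^X[e^{-\gamma L_t(0)}]\sim \tfrac{\sigma}{\gamma}\sqrt{2\kappa/\pi t}$ directly (Lemma \ref{lem:hitprob}), which encodes the trade-off between zero local time and small positive local time in a single estimate and makes the Taylor coefficients immediately tractable.

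Second, and this is the gap you flag but do not resolve: the strong-Markov iteration does \emph{not} produce a product of $k$ factors each of order $M_t\lesssim\ln t$. Applying the Markov property at each $\tau_{x_i}$ yields a factor $\phi(\text{elapsed time after }\tau_{x_i})$ with $\phi(u)=1\wedge u^{-1/2}$ (the small-local-time probability), and the sum over $x_{i+1}$ of $\P(\tau_{x_{i+1}}-\tau_{x_i}={\rm d}s)$ contributes a \emph{second} $\phi$ factor via a time-reversal argument (the paper's \eqref{3.1.5}, where one reverses the walk on $[r,s_m-1]$ with $r$ the midpoint of $[s_{m-1},s_m]$). What one actually obtains is a chain $\phi(s_1)\prod_{i\ge2}\phi^2(s_i-s_{i-1})\cdot\phi(t-s_m)$, and the $(\ln t)^{m-1}$ emerges not from $M_t$ but from the convolution inequality $\int\phi^2(u-a)\phi^2(b-u)\,{\rm d}u\le 4(\ln t)\,\phi^2(b-a)$ (the paper's \eqref{eq:singleFactor}), applied repeatedly to integrate out the interior first-visit times. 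Without this convolution step, the "single-site factor of order $\ln t$" claim in your sketch has no derivation -- the last factor alone has time horizon $t-\tau_{x_k}$, which can be small, so you cannot simply multiply. The FKG route you mention is not what the paper does; the time-reversal plus midpoint trick is the technique that lets the chain structure appear, and the chain is then closed by the convolution lemma.

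In short: you have the right shape but are missing the discrete-time/jump-count device to get uniform integrability under finite variance only, and you are missing the convolution mechanism that actually generates the logarithm. Replacing $\ind{1}_{0<L\le M_t}$ by $e^{-\gamma L}\ind{1}_{L>0}$ from the outset, as the paper does, eliminates the thick/thin bookkeeping entirely and leaves the combinatorics (Stirling-number bound on orderings) and the analytic convolution bound as the two real ingredients.
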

We will prove Proposition \ref{prop:expMomLocal} by approximating $X$ by a sequence of discrete time random walks. More precisely, for any $0<q<\frac1\kappa$, where $\kappa$ is the jump rate of $X$, let $X^q$ denote the discrete time random walk with transition probability
\begin{equation}\label{32}
\P_0^{X^q}(X^q(1) = 0) = 1-\kappa q, \quad \text{ and }
\quad \P_0^{X^q}(X^q(1) = x)
 = \kappa q p_X( x) \quad
\forall x \in \Z,
\end{equation}
where $p_X(\cdot)$ is the jump probability kernel of $X$. Let $X^q(s) :=X^q(\floor{s})$ for all $s\geq 0$. It is then a standard fact that the sequence of discrete time random walks $(X^q(s/q))_{s\geq 0}$ converges in distribution to $(X_s)_{s\geq0}$ as $q\downarrow 0$. Proposition \ref{prop:expMomLocal} will then follow from its analogue for $(X^q(s/q))_{s\geq 0}$, together with the following lemma.

\begin{lemma} \label{lem:discreteTimeApprox}
Let $X$ be an arbitrary continuous time random walk on $\Z$, and let $X^q(\cdot/ q)$ be its discrete time approximation defined above. Then for any $\lambda, t \in [0,\infty),$
\begin{equation}\label{discapprox}
\lim_{n \to \infty} \E_0^{X^{\frac1n}}
\Big [ \exp \Big\{ \lambda F^\gamma_{t}(X^{\frac1n}(\cdot n))
\Big\} \Big ]
= \E_0^X \big[ \exp \{ \lambda F_t^{\gamma}(X)\} \big].
\end{equation}
\end{lemma}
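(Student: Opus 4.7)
The plan is to combine Skorokhod-space weak convergence with almost-sure continuity of $F_t^\gamma$ and uniform exponential integrability. It is a standard fact (and noted immediately before the lemma) that $X_n := X^{\frac1n}(\cdot n) \to X$ in distribution in the Skorokhod space $D([0,t], \Z)$; by the Skorokhod representation theorem I may assume $X_n \to X$ almost surely in the $J_1$ topology on a common probability space.

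The core step, and the main obstacle, is the almost-sure continuity of the map $\omega \mapsto F_t^\gamma(\omega)$ at the limit path $X$. Since $X$ is a continuous-time random walk, almost surely it has only finitely many jumps on $[0,t]$ and stays for a positive length of time at each visited site, so ${\rm Range}_{[0,t]}(X)$ is a finite subset of $\Z$ and $\{x \in \Z: L_t^X(x) > 0\} = {\rm Range}_{[0,t]}(X)$. The $J_1$ convergence provides time changes $\lambda_n : [0,t] \to [0,t]$ with $\Vert \lambda_n - {\rm id}\Vert_\infty \to 0$ and $\sup_{s \in [0,t]} |X_n(\lambda_n(s)) - X(s)| \to 0$; because both paths are $\Z$-valued, this supremum is in fact zero for $n$ sufficiently large. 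The set $\{s \in [0,t] : X(s) = x\}$ is then a finite union of intervals, and evaluating its $\lambda_n$-measure (which converges to its Lebesgue measure since $\lambda_n \to {\rm id}$ uniformly and interval endpoints have measure zero) yields $L_t^{X_n}(x) \to L_t^X(x)$ for each $x \in \Z$. Summing over the eventually common finite range produces $F_t^\gamma(X_n) \to F_t^\gamma(X)$ almost surely.

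Finally I establish uniform integrability of $\exp\{\lambda F_t^\gamma(X_n)\}$. The trivial bound $F_t^\gamma(X_n) \le |{\rm Range}_{[0,t]}(X_n)| \le J_n + 1$, where $J_n$ counts the jumps of $X_n$ on $[0,t]$, reduces this to controlling exponential moments of $J_n$. But $J_n$ is stochastically dominated by a Binomial$(\lceil tn \rceil, \kappa/n)$ random variable, whose moment generating function at any fixed $\theta$ equals $\bigl(1+\kappa(e^\theta-1)/n\bigr)^{\lceil tn \rceil}$, uniformly bounded in $n$ and converging to that of Poisson$(\kappa t)$. Hence $\sup_n \E\bigl[\exp\{\lambda' F_t^\gamma(X_n)\}\bigr] < \infty$ for every $\lambda' > 0$, giving the required uniform integrability. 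Combined with the almost-sure convergence from the previous paragraph, the Vitali convergence theorem then yields \eqref{discapprox}.
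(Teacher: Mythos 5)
Your proof is correct and follows essentially the same two-step structure as the paper's: establish distributional convergence $F_t^\gamma(X^{1/n}(\cdot n)) \Rightarrow F_t^\gamma(X)$, then upgrade to convergence of exponential moments via uniform integrability, with UI coming from the bound $F_t^\gamma \le |{\rm Range}| \le$ (number of jumps) $\sim$ Bin$(nt, \kappa/n)$ and the binomial-to-Poisson MGF convergence. The only variation is in how the distributional convergence is established: the paper uses a hands-on coupling of the successive non-trivial jumps of $X^{1/n}$ with those of $X$ and asserts convergence of the local time process directly, whereas you invoke the Skorokhod representation theorem to get a.s.\ $J_1$-convergence on a common probability space and then argue a.s.\ continuity of the local-time and $F_t^\gamma$ functionals from the $J_1$ time changes. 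Your route is a bit more abstract but fills in the continuity argument more explicitly (and the observation that, for $\Z$-valued paths, the $J_1$ sup eventually vanishes so $X_n\circ\lambda_n = X$ exactly is a nice way to make the local-time convergence transparent). Both are standard and lead to the same conclusion via Vitali; there is no meaningful gap.
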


\begin{proof}
By coupling the successive non-trivial jumps of $X^{\frac1n}$ with those of $X$, it is easily seen that the local time process $(L^{X^{\frac1n}(\cdot n)}_t(x))_{x\in\Z}$ converges in distribution to $(L^{X}_t(x))_{x\in\Z}$, and hence $F^\gamma_{t}(X^{\frac1n}(\cdot n))$ also converges in distribution to $F_t^{\gamma}(X)$ as $n\to\infty$. Therefore to establish \eqref{discapprox}, it suffices to show that
$(\exp\{\lambda F^\gamma_{t}(X^{\frac1n}(\cdot n))\})_{n\in\N}$ are uniformly integrable.

Note that
\begin{equation}\label{34}
F^\gamma_{t}(X^{\frac1n}(\cdot n)) \leq |{\rm Range}_{s\in [0, nt]}(X^{\frac1n}(s))|
\end{equation}
is bounded by the number of non-trivial jumps of $X^{\frac1n}$ before time $nt$, which is a binomial random variable Bin$(nt, \kappa/n)$.
Since the exponential moment generating function of the sequence of Bin$(nt, \kappa/n)$ random variables converges to that of a Poisson random variable with mean $\kappa t$, the uniform integrability of $(\exp\{\lambda F^\gamma_{t}(X^{\frac1n}(\cdot n))\})_{n\in\N}$ then follows.
\end{proof}

We will also need the following result.
\begin{lemma}\label{lem:hitprob}
Let $X$ be a continuous time random walk on $\Z$ with jump rate $\kappa>0$, whose jump kernel has mean zero and variance $\sigma^2\in (0,\infty)$. Let $L^X_t(0)$ be its local time at $0$ by time $t$, and $\tau_0$ the first hitting time of $0$. Then there exists $C>0$ such that
\begin{equation}\label{EgLt}
\E^X_0\big[e^{-\gamma L_t^X(0)}\big] \sim \frac{\sigma}{\gamma} \sqrt{\frac{2\kappa}{\pi t}}
\quad \text{ as } t \to \infty \quad \mbox{and}\quad \P^X_z(\tau_0\geq t) \leq 1 \wedge \frac{C|z|}{\sqrt t} \ \forall\, t>0, z\in\Z.
\end{equation}
Furthermore, if $X^{\frac1n}(\cdot n)$ denote the random walks that approximate $X$ as in Lemma \ref{lem:discreteTimeApprox}, then there exists $C'>0$ such that for any $T>0$,
\begin{equation}\label{EgLt2}
\E^{X^{\frac1n}}_0\big[e^{-\frac{\gamma}{n} L_{nt}^{X^{\frac1n}}(0)}\big] \leq 1 \wedge \frac{C'}{\sqrt{t}} \quad \mbox{and} \quad \P^{X^{\frac1n}}_z(\tau_0\geq nt) \leq 1 \wedge \frac{C'|z|}{\sqrt t}
\end{equation}
uniformly in $t\in [0,T]$, $z\in\Z\backslash\{0\}$, and $n$ sufficiently large.
\end{lemma}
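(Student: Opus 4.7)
The plan is to prove both parts of (\ref{EgLt}) using the local central limit theorem for continuous-time random walks with mean zero and finite variance, via a Feynman--Kac / resolvent argument for the exponential moment and a strong-Markov / potential-kernel argument for the hitting time. The discrete-time bounds in (\ref{EgLt2}) follow by running exactly the same arguments for $X^{1/n}$, exploiting the fact that this walk has mean zero and variance $\kappa\sigma^2/n$ per step, so that after $ns$ steps the accumulated variance is $s\kappa\sigma^2$ and all the relevant local CLT and potential-kernel estimates match the continuous-time ones up to constants uniform in $n$.

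For the first estimate in (\ref{EgLt}), I would set $u(t,x):=\E^X_x[e^{-\gamma L^X_t(0)}]$ and observe via Feynman--Kac that $u$ is the unique bounded solution of $\partial_t u = \mathcal{L}^X u - \gamma\delta_0(x)\,u$ with $u(0,\cdot)\equiv 1$, where $\mathcal{L}^X$ is the generator of $X$. Laplace transforming in $t$ and solving the resulting resolvent equation at $x=0$ yields
\[
\int_0^\infty e^{-\lambda t}\,\E^X_0[e^{-\gamma L^X_t(0)}]\,dt \;=\; \frac{1}{\lambda(1+\gamma R_\lambda(0,0))},\qquad R_\lambda(0,0):=\int_0^\infty e^{-\lambda s}p_s(0,0)\,ds.
\]
The local CLT gives $p_s(0,0)\sim (\sigma\sqrt{2\pi\kappa s})^{-1}$ as $s\to\infty$, hence $R_\lambda(0,0)\sim(\sigma\sqrt{2\kappa\lambda})^{-1}$ and the right-hand side is $\sim \sigma\sqrt{2\kappa}/(\gamma\sqrt{\lambda})$ as $\lambda\downarrow 0$. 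Since $t\mapsto u(t,0)$ is monotone decreasing, Karamata's Tauberian theorem converts this into $u(t,0)\sim \tfrac{\sigma}{\gamma}\sqrt{\tfrac{2\kappa}{\pi t}}$.

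For the hitting-time bound the key observation is that $L^X_t(0)=0$ on $\{\tau_0>t\}$, so the strong Markov property at $\tau_0$ gives
\[
\E^X_z[L^X_t(0)] \le \P^X_z(\tau_0\le t)\,\E^X_0[L^X_t(0)],\qquad\text{and thus}\qquad \P^X_z(\tau_0>t)\le \frac{\E^X_0[L^X_t(0)]-\E^X_z[L^X_t(0)]}{\E^X_0[L^X_t(0)]}.
\]
The denominator is bounded below by $c\sqrt{t}$ via the local CLT, while the numerator equals $\int_0^t(p_s(0,0)-p_s(z,0))\,ds$, bounded above uniformly in $t$ by the potential kernel $a(z):=\int_0^\infty(p_s(0,0)-p_s(z,0))\,ds$. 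By Spitzer's classical asymptotics for one-dimensional random walks with zero mean and finite variance, $a(z)=O(|z|)$, yielding $\P^X_z(\tau_0>t)\le C|z|/\sqrt{t}$; the trivial bound $1$ handles $|z|>\sqrt{t}$. The two estimates in (\ref{EgLt2}) now follow by running precisely these Feynman--Kac / strong-Markov arguments on the discrete walk $X^{1/n}$, with the local CLT for $X^{1/n}$ providing matching asymptotics for its resolvent and potential kernel, and the constant $C'$ chosen to absorb the trivial bound $1$ at small $t$.

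The only real technical subtlety is that the local CLT for the lazy walks $(X^{1/n})_{n}$ is needed with remainder terms that are uniform in $n$. This is standard (each $X^{1/n}$ has the same non-holding jump law $p_X$ and is aperiodic thanks to positive holding probability $1-\kappa/n$), but it can also be bypassed by coupling $X^{1/n}$ with $X$ through their common embedded jump walk and comparing the geometric holding times (rescaled by $n$) with the exponential holding times of $X$, which transfers the continuous-time estimates directly to the discrete setting.
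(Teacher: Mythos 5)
Your argument for \eqref{EgLt} is in the same spirit as the paper's: the paper cites \cite[Section 2.2]{DrGaRaSu-10} (local CLT plus Karamata's Tauberian theorem) for the first estimate and \cite[Theorem 5.1.7]{LL10} for the hitting-time bound, while you derive both from scratch via a Feynman--Kac/resolvent identity and a local-time/potential-kernel argument, respectively. Your hitting-time derivation is essentially a reconstruction of the cited Lawler--Limic result and is sound, modulo one small imprecision: $\int_0^t\big(p_s(0,0)-p_s(z,0)\big)\,{\rm d}s$ is nonnegative but need not be bounded by $a(z)$ \emph{itself} (the convergence of the truncated integral to $a(z)$ is not monotone in general); what is true, and what you actually need, is the uniform-in-$t$ bound $\int_0^t(p_s(0,0)-p_s(z,0))\,{\rm d}s\le C|z|$, which follows from the standard error estimates on the potential kernel for one-dimensional mean-zero finite-variance walks. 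You should also note that the denominator lower bound $\E^X_0[L_t^X(0)]\ge c\sqrt t$ only holds for $t$ bounded away from $0$, but for small $t$ the claimed bound $1\wedge C|z|/\sqrt t$ is trivially $1$ since $|z|\ge1$, so nothing is lost.

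Where you genuinely diverge from the paper is in the passage to \eqref{EgLt2}. You propose either redoing the local CLT for $X^{1/n}$ with error terms uniform in $n$, or coupling $X^{1/n}$ to $X$ through the embedded jump chain. Both would work, but they require nontrivial bookkeeping. The paper instead uses a soft monotonicity argument: for each $n$, the function $t\mapsto\E_0^{X^{1/n}}\big[e^{-\frac{\gamma}{n}L^{X^{1/n}}_{nt}(0)}\big]$ is continuous and nonincreasing, and these converge pointwise as $n\to\infty$ to the continuous limit $\E_0^X[e^{-\gamma L_t^X(0)}]$; by Dini's theorem the convergence is uniform on $[0,T]$, and combining this with the already-established bound $\E_0^X[e^{-\gamma L_t^X(0)}]\le 1\wedge C/\sqrt t$ yields \eqref{EgLt2} for $n$ large with a slightly larger constant $C'$. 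The same device handles the hitting-time estimate. This bypasses any uniform-in-$n$ local CLT or coupling entirely, and is worth keeping in mind as a lighter-weight route whenever one only needs a uniform bound for large $n$ on a compact time interval rather than quantitative rates.
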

\begin{proof}
When $X$ is a continuous time simple symmetric random walk, the first part of \eqref{EgLt} was proved in \cite[Section 2.2]{DrGaRaSu-10} using the local central limit theorem and Karamata's Tauberian theorem. The same proof can also be applied to general $X$ with mean zero and finite variance. The second part of \eqref{EgLt} follows from Theorem 5.1.7 of \cite{LL10}.

By \eqref{EgLt},
$$
\E^X_0\big[e^{-\gamma L_t^X(0)}\big] \leq 1 \wedge \frac{C}{\sqrt t}
$$
for some $C$ uniformly in $t>0$. By the same reasoning as in the proof of Lemma \ref{lem:discreteTimeApprox}, $\E^X_0\big[e^{-\frac{\gamma}{n} L_{nt}^{X^{\frac1n}}(0)}\big]$ is a family of decreasing continuous functions in $t$ that converge pointwise to the continuous function $\E^X_0\big[e^{-\gamma L_t^X(0)}\big]$ as $n\to\infty$. Therefore this convergence must be uniform on $[0,T]$, which implies the first part of \eqref{EgLt2}. The second part of \eqref{EgLt2} follows by the same argument.
\end{proof}

\begin{proof}[Proof of Proposition \ref{prop:expMomLocal}]
We may restrict our attention to $t\geq t_0$ for some large $t_0$, since otherwise \eqref{prop3.1} is easily shown if we bound $F^\gamma_t(X)$
by the number of jumps of $X$ before time $t$.

Due to Lemma \ref{lem:discreteTimeApprox}, it then suffices to show that for some $C(\gamma)<\infty$ and for all $t\geq t_0$,
\begin{equation}
\lim_{n \to \infty} \E_0^{X^{\frac1n}}
\Big [ \exp \big\{ \lambda_t F^\gamma_{t}(X^{\frac1n}(\cdot \, n))
\big\} \Big ]
\le C(\gamma).
\end{equation}

Denote $L_{nt}^{(n)}(\cdot):=L_{nt}^{X^{\frac1n}}(\cdot)$ for simplicity, and for $x\in \Z$, let $\tau_x$ denote the first time $X^{\frac1n}$ visits $x$. By Taylor expansion and the definition of $F^\gamma_t$, we have
\begin{align}
\E_0^{X^{\frac1n}} \Big [ \exp \big\{ \lambda_t F^\gamma_{t}(X^{\frac1n}(\cdot \, n)) \big\} \Big ]
=\, & 1+ \!\!\sum_{k=1}^\infty \frac{\lambda_t^k}{k!} \sum_{x_1, \ldots, x_k\in\Z} \E_0^{X^\frac1n}\Big[\prod_{i=1}^k e^{-\frac{\gamma}{n} L_{nt}^{(n)}(x_i)} \ind{1}_{L_{nt}^{(n)}(x_i)>0}\Big] \nonumber \\
=\, & 1+ \!\!\sum_{k=1}^\infty \frac{\lambda_t^k}{k!} \sum_{x_1, \ldots, x_k\in\Z\atop 0\leq s_1,\ldots, s_k\leq nt} \E_0^{X^\frac1n}\Big[\prod_{i=1}^k e^{-\frac{\gamma}{n} L_{nt}^{(n)}(x_i)} \ind{1}_{\tau_{x_i}=s_i}\Big] \nonumber \\
\leq\, & 1+ \!\!\sum_{k=1}^\infty \lambda_t^k \sum_{m=1}^k \frac{m^{k-m}}{(k-m)!}
\sum_{0\leq t_1<t_2<\cdots<t_m\leq nt \atop y_1, \ldots, y_m\in\Z} \!\!\!\!\!\!\!\!\!\! \E_0^{X^\frac1n}\Big[\prod_{i=1}^m e^{-\frac{\gamma}{n} L_{nt}^{(n)}(y_i)} \ind{1}_{\tau_{y_i}=t_i}\Big]  \nonumber \\
=\, & 1+ \!\!\sum_{m=1}^\infty e^{m\lambda_t}\lambda_t^m \sum_{0\leq s_1<s_2<\cdots<s_m\leq nt \atop x_1, \ldots, x_m\in\Z} \!\!\!\!\!\!\!\!\!\! \E_0^{X^\frac1n}\Big[\prod_{i=1}^m e^{-\frac{\gamma}{n} L_{nt}^{(n)}(x_i)} \ind{1}_{\tau_{x_i}=s_i}\Big],  \label{3.1.2}
\end{align}
where in the inequality, we took advantage of the fact that for any $0\leq t_1<t_2<\cdots <t_m\leq nt$ with $1\leq m\leq k$, the number of ways of choosing $s_1, \ldots, s_k$ from $\{t_1, \ldots, t_m\}$ so that each $t_i$ is chosen at least once is given by $m!S(k,m)\leq \frac{1}{2}\frac{k!m^{k-m}}{(k-m)!}$, where $S(k,m)$ is called a Stirling number of the second kind~\cite[Theorem 3]{RD69}. We also used that when
$\tau_{x_i}=s_i=\tau_{x_j}=s_j$, we must have $x_i=x_j$.

Using $L^{(n)}_{nt}(x_i)\geq L^{(n)}_{s_m}(x_i)$ for $1\leq i\leq m-1$, and applying the strong Markov property at time $\tau_{x_k}=s_k$, we can bound the expectation in \eqref{3.1.2} by
\begin{align}
& \E_0^{X^\frac1n}\Big[\prod_{i=1}^{m-1} e^{-\frac{\gamma}{n} L_{s_m}^{(n)}(x_i)} \prod_{i=1}^{m} \ind{1}_{\tau_{x_i}=s_i}\Big] \E_0^{X^\frac1n}\Big[e^{-\frac{\gamma}{n} L_{nt-s_m}^{(n)}(0)}\Big] \nonumber \\
\leq\ & \E_0^{X^\frac1n}\Big[\prod_{i=1}^{m-1} e^{-\frac{\gamma}{n} L_{s_m}^{(n)}(x_i)} \prod_{i=1}^{m} \ind{1}_{\tau_{x_i}=s_i}\Big] \cdot C\phi\big(t-\frac{s_m}{n}\big), \label{3.1.3}
\end{align}
where $\phi(u):=1 \wedge \frac{1}{\sqrt{u}}$ and we applied \eqref{EgLt2} to obtain the inequality.

We now bound the expectation in \eqref{3.1.3}, summed over $x_m\in\Z$. Let $r := \lfloor\frac{s_{m-1} +s_m}{2}\rfloor$. Using $L^{(n)}_{s_m}(x_i)\geq L^{(n)}_r(x_i)$ for $1\leq i\leq m-1$ and applying the Markov property at time $r$ gives
\begin{eqnarray}
&& \sum_{ x_m \in \Z}
\E_0^{X^\frac1n} \Big[\prod_{i=1}^{m-1} e^{-\frac{\gamma}{n} L_{s_m}^{(n)}(x_i)} \prod_{i=1}^{m} \ind{1}_{\tau_{x_i}=s_i}\Big]
= \sum_{ x_m, y \in \Z}
\E_0^{X^\frac1n} \Big[\prod_{i=1}^{m-1} e^{-\frac{\gamma}{n} L_{s_m}^{(n)}(x_i)} \prod_{i=1}^{m} \ind{1}_{\tau_{x_i}=s_i} \cdot \ind{1}_{X^{\frac1n}(r)=y} \Big] \nonumber \\
&\le& \sum_{ y \in \Z} \E_0^{X^\frac1n} \Big[\prod_{i=1}^{m-1} e^{-\frac{\gamma}{n} L_{r}^{(n)}(x_i)} \prod_{i=1}^{m-1} \ind{1}_{\tau_{x_i}=s_i} \cdot \ind{1}_{X^{\frac1n}(r)=y} \Big]
\cdot \sum_{x_m \in \Z} \P_y^{X^\frac1n} \big(\tau_{x_m} = s_m -r \big). \label{3.1.4}
\end{eqnarray}
If $\widetilde X^{\frac1n}$ denotes the time-reversal of $X^{\frac1n}$, which has the same increment distribution as $-X^{\frac1n}$, then by time reversal and translation invariance, we have
\begin{eqnarray}
\sum_{x_m \in \Z} \P_y^{X^\frac1n} \big(\tau_{x_m} = s_m -r \big) &=& \P_0^{\widetilde X^{\frac1n}}\big(\widetilde X^{\frac1n}(1)\neq 0,\
\widetilde \tau_0> s_m-r\big)
= \frac{\kappa}{n} \sum_{z\in\Z} p_X(z) \P_z^{\widetilde X^{\frac1n}}(\tau_0 \geq s_m-r) \nonumber\\
&\leq& \frac{\kappa}{n} \sum_{z\in\Z} |z|p_X(z) \Big(1\wedge \frac{C'}{\sqrt{\frac{s_m}{n}-\frac{r}{n}}}\Big) \leq \frac{C}{n} \phi\big(\frac{s_m-s_{m-1}}{n}\big),
 \label{3.1.5}
\end{eqnarray}
where $\widetilde \tau_0:=\min\{i\geq 1: \widetilde X^{\frac1n}(i)=0\}$, and we applied \eqref{EgLt2} in the first inequality. Note that this bound no longer depends on $y$.

Substituting the bound of \eqref{3.1.5} into \eqref{3.1.4}, and then successively into \eqref{3.1.3} and \eqref{3.1.2}, we obtain
\begin{align}
& \sum_{x_1, \ldots, x_m\in\Z} \E_0^{X^\frac1n}\Big[\prod_{i=1}^m e^{-\frac{\gamma}{n} L_{nt}^{(n)}(x_i)} \ind{1}_{\tau_{x_i}=s_i}\Big] \nonumber\\
\leq\ \ &  \frac{C^2}{n} \phi\big(t-\frac{s_m}{n}\big) \phi\big(\frac{s_m-s_{m-1}}{n}\big)
\sum_{x_1, \ldots, x_{m-1}\in\Z}
\E_0^{X^\frac1n} \Big[\prod_{i=1}^{m-1} e^{-\frac{\gamma}{n} L_{r}^{(n)}(x_i)} \ind{1}_{\tau_{x_i}=s_i}\Big]. \label{312}
\end{align}
We can now iterate this bound to obtain
\begin{align}\label{313}
\sum_{x_1, \ldots, x_m\in\Z} \E_0^{X^\frac1n}\Big[\prod_{i=1}^m e^{-\frac{\gamma}{n} L_{nt}^{(n)}(x_i)} \ind{1}_{\tau_{x_i}=s_i}\Big]
\leq \frac{C^m}{n^m} \phi\big(\frac{s_1}{n}\big) \prod_{i=2}^m \phi^2\big(\frac{s_i-s_{i-1}}{n}\big) \cdot \phi\big(t-\frac{s_m}{n}\big),
\end{align}
where $\phi(u)=1\wedge \frac{1}{\sqrt u}$. Therefore the inner summand in \eqref{3.1.2} can be bounded by

\begin{eqnarray}
&&\sum_{0 \le s_1 < s_2 < \ldots < s_m \le nt}
\frac{C^m}{n^m} \phi\big(\frac{s_1}{n}\big) \prod_{i=2}^m \phi^2\big(\frac{s_i-s_{i-1}}{n}\big) \cdot \phi\big(t-\frac{s_m}{n}\big) \nonumber\\
&\le&  C^m \idotsint\limits_{0<t_1<\cdots <t_m<t} \phi(t_1)\phi(t-t_m)\prod_{i=2}^m \phi^2(t_i-t_{i-1}) {\rm d} t_1\cdots {\rm d}t_m. \label{eq:trafo}
\end{eqnarray}
Note that given $t_{j-1}<t_{j+1}$,
\begin{align}
\int_{t_{j-1}}^{t_{j+1}}
\phi^2(t_j-t_{j-1}) \phi^2(t_{j+1}-t_{j}) {\rm d}t_j & = \int_{t_{j-1}}^{t_{j+1}}
\Big(1\wedge \frac{1}{t_j-t_{j-1}}\Big) \Big(1\wedge \frac{1}{t_{j+1}-t_{j}}\Big) {\rm d}t_j \nonumber \\
& \leq 4(\ln t) \Big( 1\wedge \frac{1}{t_{j+1}-t_{j-1}}\Big) = 4(\ln t) \phi^2(t_{j+1}-t_{j-1}), \label{eq:singleFactor}
\end{align}
where the bound clearly holds when $t_{j+1}-t_{j-1}\leq 1$. When $t_{j+1}-t_{j-1}>1$, the inequality is obtained by
dividing the interval of integration into $[t_{j-1}, (t_{j+1}-t_{j-1})/2]$ and $[(t_{j+1}-t_{j-1})/2, t_{j+1}]$, where
in the first case we use the bound $\frac{1}{t_{j+1}-t_j} \leq \frac{2}{t_{j+1}-t_{j-1}}$, and in the second case we use
the bound $\frac{1}{t_{j}-t_{j-1}} \leq \frac{2}{t_{j+1}-t_{j-1}}$.

Applying \eqref{eq:singleFactor} repeatedly to \eqref{eq:trafo} to integrate out $t_2, \ldots, t_{m-1}$, we can bound the right-hand side of
\eqref{eq:trafo} from above by
\begin{equation}\label{316}
C^m (4\ln t)^{m-2} \iint\limits_{0 < t_1  < t_m < t}
 \Big(1\wedge \frac{1}{\sqrt t_1}\Big)
\Big(1 \wedge \frac{1}{ t_{m} - t_1} \Big) \Big(1\wedge \frac{1}{\sqrt{t-t_m}}\Big)\, {\rm d}t_1 {\rm d}t_m
\le \widetilde C^m (\ln t)^{m-1},
\end{equation}
where the integral is bounded by considering the three cases: $t_1\geq t/3$, $t_m-t_1\geq t/3$, or $t-t_m\geq t/3$.
Substituting this bound for \eqref{eq:trafo} back into \eqref{3.1.2} then gives
\begin{align*}
\E_0^{X^\frac1n} \big[ \exp \{ \lambda_t F^\gamma_{t}(X^{\frac1n}(\cdot n))\} \big]
&\le 1+\sum_{m=1}^\infty  e^{m\lambda_t}\lambda_t^m\widetilde C^m (\ln t)^{m-1}
\le \frac{1}{1-e^{c(\gamma)}c(\gamma)\widetilde C} =:C(\gamma) <\infty
\end{align*}
uniformly in $n$ if $c(\gamma)$ is chosen small enough such that $e^{c(\gamma)}c(\gamma)<1/\widetilde C$. This finishes the proof.
\end{proof}

\section{Proof of Theorem \ref{thm:range}} \label{prange}
The proof follows the same line of argument as that of Proposition \ref{prop:expMomLocal}, except for some complications. We first approximate $X$ by the family of discrete time
random walks $X^{\frac1n}(\cdot n)$, $n\in\N$. Recall from \eqref{GtX} that
\begin{equation}\label{GtX2}
G_t(X):= \big(\sup_{0\leq s\leq t} X_s - \inf_{0\leq s\leq t} X_s\big) - |{\rm Range}_{s\in [0,t]}(X_s)| = \sum_{\inf_{s\in [0,t]}X_s <x< \sup_{s\in [0,t]} X_s} \ind{1}_{L^X_t(x)=0}.
\end{equation}
The following is an analogue of Lemma \ref{lem:discreteTimeApprox}.

\begin{lemma} \label{lem:dTA}
Let $X$ be a continuous time random walk on $\Z$, whose jump kernel $p_X$ satisfies $\sum_{x\in\Z} p_X(x)e^{\lambda_* |x|}<\infty$ for some $\lambda_*>0$. Let $X^{\frac1n}(\cdot n)$ be the discrete time approximation of $X$ defined as in \eqref{32}. Then for any $\lambda<\lambda_*$ and $t \in [0,\infty)$, we have
\begin{equation}\label{discapprox2}
\lim_{n \to \infty} \E_0^{X^{\frac1n}} \Big [ \exp \Big\{ \lambda G_{t}(X^{\frac1n}(\cdot n)) \Big\} \Big ]
= \E_0^X \big[ \exp \{ \lambda G_t(X)\} \big].
\end{equation}
\end{lemma}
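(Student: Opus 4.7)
Following the template of Lemma \ref{lem:discreteTimeApprox}, I would deduce \eqref{discapprox2} from two ingredients: convergence in distribution of $G_t(X^{\frac1n}(\cdot n))$ to $G_t(X)$ as $n\to\infty$, and uniform integrability of the family $\{\exp(\lambda G_t(X^{\frac1n}(\cdot n)))\}_{n\in\N}$.

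For the convergence in distribution, I would construct a coupling on a common probability space so that the successive non-trivial increments of $X^{\frac1n}$ match the successive jumps of $X$, and the rescaled jump times of $X^{\frac1n}$ converge to those of $X$ almost surely (standard thinning/Skorokhod coupling between a geometric clock of parameter $\kappa/n$ and an exponential clock of rate $\kappa$). Since $t$ is almost surely not a jump time of $X$, for all sufficiently large $n$ the two paths visit the same set of sites in $[0,t]$ and share the same supremum and infimum; hence $G_t(X^{\frac1n}(\cdot n)) = G_t(X)$ eventually almost surely under this coupling, which in particular gives convergence in distribution.

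For the uniform integrability, the key geometric observation is that every hole in the range of a c\`adl\`ag $\Z$-valued path must lie strictly between the endpoints of some jump of absolute size at least two, and such a jump of size $|J|$ contributes at most $|J|-1$ holes. Consequently, letting $(\xi_k^n)_{k=1}^{\lfloor nt \rfloor}$ denote the i.i.d.\ increments of $X^{\frac1n}$,
\begin{equation*}
G_t(X^{\frac1n}(\cdot n)) \;\le\; \sum_{k=1}^{\lfloor nt \rfloor} |\xi_k^n|.
\end{equation*}
Fixing $\lambda_1 \in (\lambda, \lambda_*)$, the moment generating function of $|\xi_1^n|$ at $\lambda_1$ equals $1 + (\kappa/n)(M(\lambda_1)-1)$ with $M(\lambda_1) := \sum_x e^{\lambda_1 |x|} p_X(x) < \infty$ by hypothesis, so
\begin{equation*}
\E_0^{X^{\frac1n}}\Big[ \exp\Big\{ \lambda_1 \sum_{k=1}^{\lfloor nt \rfloor} |\xi_k^n| \Big\} \Big] \;=\; \Big(1 + \frac{\kappa}{n}(M(\lambda_1)-1)\Big)^{\lfloor nt \rfloor},
\end{equation*}
which stays uniformly bounded in $n$ with limit $\exp\{\kappa t(M(\lambda_1)-1)\}$. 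This yields the required uniform integrability of $\{\exp(\lambda G_t(X^{\frac1n}(\cdot n)))\}_n$, and combined with the first step, Vitali's convergence theorem finishes the proof.

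The main obstacle is establishing the geometric bound $G_t(X^{\frac1n}(\cdot n)) \le \sum_k |\xi_k^n|$ cleanly: one must argue that holes are produced solely by jumps skipping over intermediate sites, so the number of holes surviving at time $t$ is at most the total number of such skipped positions (counted with multiplicity). The remainder of the argument is essentially parallel to the uniform-integrability step in the proof of Lemma \ref{lem:discreteTimeApprox}, with the exponential-moment hypothesis on $p_X$ playing the role of the binomial concentration used there and matching exactly the constraint $\lambda < \lambda_*$ in the claim.
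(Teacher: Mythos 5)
Your proof is correct and follows essentially the same two-step route as the paper: convergence in distribution plus uniform integrability via the bound $G_t \le \sum_k |\xi_k^n|$ and the compound-binomial-to-compound-Poisson limit. The ``main obstacle'' you flag at the end is actually no obstacle at all: since $G_t(f) = \big(\sup_{[0,t]} f - \inf_{[0,t]} f\big) - |{\rm Range}_{[0,t]}(f)| \le \sup_{[0,t]} f - \inf_{[0,t]} f$ trivially, and the sup-minus-inf is bounded by the sum of the absolute jump sizes, no refined hole-counting argument is needed.
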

\begin{proof}
Clearly $G_t(X^{\frac{1}{n}}(\cdot n))$ converges in distribution to $G_t(X)$ as $n\to\infty$. It remains to show the uniform integrability of
$(\exp\{\lambda G_{t}(X^{\frac1n}(\cdot n))\})_{n\in\N}$. Similarly, as in  \eqref{34} we have,
$$
G_t(X^{\frac{1}{n}}(\cdot n)) \leq \sup_{0\leq i\leq nt}X^{\frac{1}{n}}(i) - \inf_{0\leq i\leq nt}X^{\frac{1}{n}}(i)
$$
is bounded by the sum of the sizes of the jumps of $X^{\frac1n}$ before time $nt$, which is a compound binomial random variable with binomial
parameters $(nt, \kappa/n)$ and summand distribution $\widehat p_X(x)=p_X(x) 1_{x\geq 0} +p_X(-x)1_{x>0}$. As $n\to\infty$, this converges to a compound Poisson random variable with Poisson parameter $\kappa t$ and summand distribution $\widehat p_X$. Since we assume $\sum_{x\in\Z}e^{\lambda_*|x|} p_X(x)<\infty$ for some $\lambda_*>0$, it is then easily seen that $(\exp\{\lambda G_{t}(X^{\frac1n}(\cdot n))\})_{n\in\N}$ is uniformly integrable for $\lambda <\lambda_*$.
\end{proof}

\begin{proof}[{Proof of Theorem \ref{thm:range}}] As in the proof of Proposition \ref{prop:expMomLocal}, it suffices to show that for some $C<\infty$ and for all $t$ sufficiently large,
\begin{equation}
\lim_{n \to \infty} \E_0^{X^{\frac1n}} \Big [ \exp \big\{ \lambda_t G_{t}(X^{\frac1n}(\cdot \, n)) \big\} \Big ] \le C.
\end{equation}

Given $X^{\frac1n}(0)=0$, for $x\in\Z$, define
$$
\widetilde \tau_x:= \left\{
\begin{aligned}
& \min\{i\geq 0: X^{\frac1n}(i) \geq x\} \qquad \mbox{if } x\geq 0 \\
& \min\{i\geq 0: X^{\frac1n}(i) \leq x\} \qquad \mbox{if } x\leq 0
\end{aligned}
\right.,
\qquad \tau_x:=\min\{i\geq 0: X^{\frac1n}(i)=x\}.
$$
Using \eqref{GtX2}, as in \eqref{3.1.2}, we can expand
\begin{align}
& \E_0^{X^{\frac1n}} \Big [ \exp \big\{ \lambda_t G_{t}(X^{\frac1n}(\cdot \, n)) \big\} \Big ] \nonumber \\
=\ & 1+ \!\!\sum_{k=1}^\infty \frac{\lambda_t^k}{k!} \sum_{x_1, \ldots, x_k\in\Z} \E_0^{X^\frac1n}\Big[\prod_{i=1}^k  \ind{1}_{\widetilde\tau_{x_i}\leq nt<\tau_{x_i}}\Big]
=\ 1+ \!\!\sum_{k=1}^\infty \frac{\lambda_t^k}{k!} \sum_{x_1, \ldots, x_k\in\Z\atop 0\leq s_1,\ldots, s_k\leq nt} \E_0^{X^\frac1n}\Big[\prod_{i=1}^k  \ind{1}_{\widetilde\tau_{x_i}=s_i, \tau_{x_i}>nt}\Big] \nonumber \\
=\ & 1+ \!\!\sum_{k=1}^\infty \frac{\lambda_t^k}{k!} \sum_{m=1}^k \sum_{0< t_1<\cdots<t_m\leq nt \atop I_1, \ldots, I_m \vdash\{1,\ldots, k\}} \sum_{x_1, \ldots, x_k\in\Z}  \E_0^{X^\frac1n}\Big[\prod_{i=1}^m \prod_{j\in I_i} \ind{1}_{\widetilde\tau_{x_j}=t_i, \tau_{x_j}>nt}\Big] \nonumber\\
=\ & 1+ \!\!\sum_{k=1}^\infty \frac{\lambda_t^k}{k!} \sum_{m=1}^k \sum_{0< t_1<\cdots<t_m\leq nt \atop I_1, \ldots, I_m \vdash\{1,\ldots, k\}} \sum_{\substack{x_1,\ldots, x_k\in\Z\\ y_1,\ldots, y_m\in\Z \\ z_1, \ldots, z_m\in\Z}} \!\!\!\!\!\! \E_0^{X^\frac1n}\Big[\prod_{i=1}^m \Big(\ind{1}_{X^{\frac1n}(t_i-1)=y_i, X^{\frac1n}(t_i)=z_i}\prod_{j\in I_i} \ind{1}_{\widetilde\tau_{x_j}=t_i, \tau_{x_j}>nt}\Big)\Big], \label{44}
\end{align}
where in the third line, we summed over all ordered non-empty disjoint sets $(I_1, \ldots, I_m)$ which partition $\{1,\ldots, k\}$. 
Note that when $\widetilde \tau_{x_j}=t_m$ for all $j\in I_m$, $x_j$ must be strictly between $y_m$ and $z_m$ for all $j\in I_m$.
By the Markov property at time $t_m$ and by Lemma \ref{lem:hitprob}, we can bound 
\begin{align}
& \E_0^{X^\frac1n}\Big[\prod_{i=1}^m \Big(\ind{1}_{X^{\frac1n}(t_i-1)=y_i, X^{\frac1n}(t_i)=z_i}\prod_{j\in I_i} \ind{1}_{\widetilde\tau_{x_j}=t_i, \tau_{x_j}>nt}\Big)\Big] \nonumber\\
\leq \ & \E_0^{X^\frac1n}\Big[\prod_{i=1}^{m-1} \Big(\ind{1}_{X^{\frac1n}(t_i-1)=y_i, X^{\frac1n}(t_i)=z_i}\prod_{j\in I_i} \ind{1}_{\widetilde\tau_{x_j}=t_i, \tau_{x_j}>t_m}\Big)\Big(\prod_{j\in I_m} \ind{1}_{\tau_{x_j}>t_m}\Big) \ind{1}_{X^{\frac1n}(t_m-1)=y_m}\Big] \nonumber \\
& \qquad \qquad \quad \times \frac{\kappa}{n} p_X(z_m-y_m)\, \max_{x\in (y_m\wedge z_m, y_m\vee z_m)} \P^{X^{\frac1n}}_{z_m}(\tau_x>nt-t_m) \cdot \prod_{j\in I_m} \ind{1}_{x_j\in (y_m\wedge z_m, y_m\vee z_m)} \nonumber \\
\leq \ & \E_0^{X^\frac1n}\Big[\prod_{i=1}^{m-1} \Big(\ind{1}_{X^{\frac1n}(t_i-1)=y_i, X^{\frac1n}(t_i)=z_i}\prod_{j\in I_i} \ind{1}_{\widetilde\tau_{x_j}=t_i, \tau_{x_j}>t_m}\Big)\Big(\prod_{j\in I_m} \ind{1}_{\tau_{x_j}>t_m}\Big) \ind{1}_{X^{\frac1n}(t_m-1)=y_m}\Big] \nonumber \\
& \qquad \qquad \quad \times \frac{\kappa}{n} p_X(z_m-y_m)\, C|z_m-y_m|\phi\big(t-\frac{t_m}{n}\big) \cdot \prod_{j\in I_m} \ind{1}_{x_j\in (y_m\wedge z_m, y_m\vee z_m)}, \label{45}
\end{align}
where as before $\phi(u) = 1\wedge \frac{1}{\sqrt u}$, and this is the analogue of \eqref{3.1.3} in the proof of Proposition \ref{prop:expMomLocal}. 

Let $r := \lfloor\frac{t_{m-1} +t_m}{2}\rfloor$. Applying the Markov property at time $r$ and summing the above bound over $y_m,z_m$ and $(x_j)_{j\in I_m}$ then gives
\begin{align}
& C\frac{\kappa}{n}\phi\big(t-\frac{t_m}{n}\big) \!\!\!\!\!\!\!\!\!\!\!\! \sum_{\substack{y_m,z_m\in\Z \\ x_j\in (y_m\wedge z_m, y_m\vee z_m):\, j\in I_m}}\!\!\!\!\!\!\!\!\!\!\!\!\!\!\! p_X(z_m-y_m) |z_m-y_m| \sum_{w\in\Z} \P^{X^{\frac1n}}_w \Big[ \Big(\prod_{j\in I_m} \ind{1}_{\tau_{x_j}\ge t_m-r}\Big) \ind{1}_{X^{\frac1n}(t_m-1-r)=y_m}\Big]\nonumber \\
& \qquad \qquad \qquad \qquad \quad \times \E_0^{X^\frac1n}\Big[\prod_{i=1}^{m-1} \Big(\ind{1}_{X^{\frac1n}(t_i-1)=y_i, X^{\frac1n}(t_i)=z_i}\prod_{j\in I_i} \ind{1}_{\widetilde\tau_{x_j}=t_i, \tau_{x_j}>r}\Big)\ind{1}_{X^{\frac1n}(r)=w}\Big] \nonumber \\
\leq\ & C\frac{\kappa}{n}\phi\big(t-\frac{t_m}{n}\big) \!\!\! \sum_{v, w\in\Z}\!\!\! p_X(v) |v|^{|I_m|+1}  \max_{x\in (0\wedge v, 0\vee v)} \P^{\widetilde X^{\frac1n}}_0 (\tau_x \ge t_m-r) \nonumber \\
& \qquad \qquad \qquad \qquad \quad \times \E_0^{X^\frac1n}\Big[\prod_{i=1}^{m-1} \Big(\ind{1}_{X^{\frac1n}(t_i-1)=y_i, X^{\frac1n}(t_i)=z_i}\prod_{j\in I_i} \ind{1}_{\widetilde\tau_{x_j}=t_i, \tau_{x_j}>r}\Big)\ind{1}_{X^{\frac1n}(r)=w}\Big] \nonumber \\
\leq\ & C^2\frac{\kappa}{n}\phi\big(t-\frac{t_m}{n}\big)\phi\big(\frac{t_m-t_{m-1}}{2}\big) \!\! \sum_{v\in\Z}\!\! p_X(v) |v|^{|I_m|+2} \nonumber \\
& \qquad \qquad \qquad \qquad \quad \times \E_0^{X^\frac1n}\Big[\prod_{i=1}^{m-1} \Big(\ind{1}_{X^{\frac1n}(t_i-1)=y_i, X^{\frac1n}(t_i)=z_i}\prod_{j\in I_i} \ind{1}_{\widetilde\tau_{x_j}=t_i, \tau_{x_j}>r}\Big)\Big], \label{46}
\end{align}
where we have reversed time for $X^{\frac1n}$ on the time interval
$[r, t_m-1]$, with $\widetilde X^{\frac1n}$ denoting the time-reversed
random walk, and in the last inequality we again applied Lemma
\ref{lem:hitprob}. This bound is the analogue of \eqref{312}, which
can now be iterated. The calculations in \eqref{313}--\eqref{316} then
give
\begin{align}
& \sum_{0< t_1<\cdots<t_m\leq nt} \sum_{\substack{x_1,\ldots, x_k\in\Z\\ y_1,\ldots, y_m\in\Z \\ z_1, \ldots, z_m\in\Z}} \!\! \E_0^{X^\frac1n}\Big[\prod_{i=1}^m \Big(\ind{1}_{X^{\frac1n}(t_i-1)=y_i, X^{\frac1n}(t_i)=z_i}\prod_{j\in I_i} \ind{1}_{\widetilde\tau_{x_j}=t_i, \tau_{x_j}>nt}\Big)\Big] \nonumber \\
\leq \ & \widetilde C^m (\ln t)^{m-1} \prod_{i=1}^m M(|I_i|+2), \label{47}
\end{align}
where $M(\alpha):=\sum_{x\in\Z} |x|^\alpha p_X(x)$. Substituting this bound into \eqref{44} then gives (uniformly in $n$) 
\begin{align}
& \E_0^{X^{\frac1n}} \Big [ \exp \big\{ \lambda_t G_{t}(X^{\frac1n}(\cdot \, n)) \big\} \Big ] 
\leq  1+ \!\!\sum_{k=1}^\infty \frac{\lambda_t^k}{k!} \sum_{m=1}^k \sum_{I_1, \ldots, I_m \vdash\{1,\ldots, k\}} \widetilde C^m (\ln t)^{m-1} \prod_{i=1}^m M(|I_i|+2) \nonumber\\
\leq\ & 1+ \!\!\sum_{m=1}^\infty (\widetilde C \ln t)^{m} \sum_{k_1, \ldots, k_m=1}^\infty \frac{\lambda_t^{k_1+\cdots +k_m}}{(k_1+\cdots +k_m)!}\sum_{I_1, \ldots, I_m \vdash\{1,\ldots, k_1+\cdots +k_m\}\atop |I_1|=k_1, \ldots, |I_m|=k_m} \prod_{i=1}^m M(k_i+2) \nonumber \\
=\ & 1+ \!\!\sum_{m=1}^\infty (\widetilde C\ln t)^{m} \sum_{k_1, \ldots, k_m=1}^\infty \frac{\lambda_t^{k_1+\cdots +k_m}}{(k_1+\cdots +k_m)!}
{k_1+\cdots+k_m \choose k_1, \ldots, k_m} \prod_{i=1}^m M(k_i+2) \nonumber \\
=\ & 1+ \!\!\sum_{m=1}^\infty (\widetilde C\ln t)^{m} \Big(\sum_{k=1}^\infty \frac{\lambda_t^k}{k!}M(k+2)\Big)^m 
= \frac{1}{1- \widetilde C\ln t \sum_{k=1}^\infty \frac{\lambda_t^k}{k!}M(k+2)} <C<\infty \label{48}
\end{align}
if $c$ in $\lambda_t=\frac{c}{1\vee \ln t}$ is chosen small enough. Indeed, let $V$ be a random variable with $M(\alpha)=\E[|V|^\alpha]$. Since assumption (\ref{expmomy}) implies $\E[e^{\lambda_* |V|}]<\infty,$ we have
\begin{align*}
& \widetilde C\ln t \sum_{k=1}^\infty \frac{\lambda_t^k}{k!}M(k+2)  =\E\Big[\sum_{k=1}^\infty  \frac{c^k\widetilde C\ln t }{(1\vee\ln t)^k k!} |V|^{k+2} \Big] \\
\leq\ & c\,\widetilde C\, \E\Big[|V|^3   \sum_{k=1}^\infty \frac{c^{k-1}}{(1\vee\ln t)^{k-1} (k-1)!} |V|^{k-1} \Big]
\leq c\,\widetilde C\, \E\big[|V|^6\big]^{\frac12}\E\Big[e^{\frac{2c}{1\vee\ln t} |V|}\Big]^{\frac12} <1
\end{align*}
if $t$ is large enough and $c$ is chosen small enough. This completes the proof. 
\end{proof}

\section{Proof of Theorem \ref{thm:main}} \label{pmain}

To prepare for the proof, we recall here the strategy for the lower
bound on the annealed survival probability employed in
\cite{DrGaRaSu-10}, and we show how to rewrite the survival
probability in terms of the range of random walks.

\subsection{Strategy for lower bound} \label{subsec:Previous}

The lower bound on the annealed survival probabilities in Theorem \ref{thm:annealedAsymptotics} follows the same strategy as for the case of immobile traps in previous works. Denote $B_r=\{x\in\Z^d : \Vert x\Vert_\infty\leq r\}$. For a fixed time $t$, we force the environment $\xi$ to create a ball $B_{R_t}$ of radius $R_t$ around the origin, which is free of traps from time $0$ up to time $t$. We then force the random walk $X$ to stay inside $B_{R_t}$ up to time $t$. This leads to a lower bound on the survival probability that is independent of $\gamma\in (0,\infty]$.

To be more precise, we consider the following events:
\begin{itemize}
 \item
 Let $E_t$
denote the event that $N_y=0$ for all $y\in B_{R_t}$.
\item
 Let $F_t$ denote the event that $Y^{j,y}_s \notin B_{R_t}$ for all
$y\notin B_{R_t},$ $1\leq j\leq N_y,$ and $s \in [0,t].$
\item
Let $G_t$ denote the event that $X$ with $X(0)=0$ does not leave $B_{R_t}$ before time $t$.
\end{itemize}
Then, by the strategy outlined above, the annealed survival probability
\begin{equation}\label{eq:lowerBdSurvProbImm}
\E^X_0[Z^\gamma_{t,X}] \geq \P(E_t \cap F_t \cap G_{t}) = \P(E_t)\P(F_t)\P(G_{t}),
\end{equation}
since $E_t,$ $F_t,$ and $G_t$ are independent.

In order to lower bound \eqref{eq:lowerBdSurvProbImm}, note that
\begin{equation} \label{eq:EtProb}
\P(E_t) = e^{-\nu (2R_t+1)^d}.
\end{equation}

To estimate $\P(G_{t})$, Donsker's invariance principle implies that there exists $\alpha>0$ such that for all $t$ sufficiently large,
\begin{equation*}
\inf_{x\in B_{\sqrt t/2}}\P^X_0 \left(X_s \in B_{\sqrt t}\ \forall\ s\in [0,t]\, ,\ X_t \in B_{\sqrt t/2} \; \Big|
\; X_0=x\right) \ge \alpha .
\end{equation*}
Now if $1 \ll R_t \ll \sqrt{t}$ as $t\to\infty$,
then by partitioning the time interval $[0,t]$ into intervals of length $R_t^2$ and applying the Markov property at times $iR_t^2$, we obtain
\begin{eqnarray} \label{eq:lowerGtEstimate}
\P(G_{t}) &\geq& \P_0^X \Big(X_s\in B_{R_t}\ \forall\ s\in [(i-1)R_t^2, iR_t^2], \text{ and } X_{i R_t^2}\in B_{R_t/2},\ i=1, 2, \cdots,
\lceil t/R_t^2\rceil\Big) \nonumber\\
&\geq& \alpha^{\lceil t/R_t^2\rceil} = (1+o(1))e^{t\ln \alpha/R_t^2}.
\end{eqnarray}
This actually gives the correct logarithmic order of decay for $\P(G_t)$. Indeed, by Donsker's invariance principle, uniformly in $t$ large
and $X_0=x\in B_{R_t}$,
$$
\P^X_x(X_s\notin B_{R_t} \mbox{ for some } s\in [0, R_t^2]) \geq \P^W_0(W_s \notin B_{3} \mbox{ for some } s\in [0,1])=: \rho>0,
$$
where $W$ is a standard $d$-dimensional Brownian motion. Therefore by a similar application of the Markov inequality as in \eqref{eq:lowerGtEstimate}, we find that
\begin{align} \label{eq:GtUpperBd}
\P(G_t) \le e^{\floor{t/R_t^2} {\ln (1-\rho)}}.
\end{align}

In dimension $d=1,$ which is our main focus, integrating out the Poisson initial distribution of the $Y$-particles gives
\begin{align} \label{eq:sFtAsympt}
\begin{split}
\P(F_t) &= \exp \Big \{
-\nu \sum_{y \in \Z \setminus B_{R_t} } \P^{Y}_{y} (\tau^Y(B_{R_{t}}) \leq t)
\Big \}
= \exp \Big \{
-\nu \sum_{y \in \Z \setminus \{0\} } \P^{Y}_{y} (\tau^Y(\{0\}) \leq t)
\Big \}\\
&= \exp \Big \{
-\nu \sum_{y \in \Z \setminus \{0\} } \P^{Y}_{0} (\tau^Y(\{-y\}) \leq t)
\Big \} = \exp \Big\{
-\nu  \big( \E^Y_0\big[\,\big|{\rm Range}_{s\in [0,t]}(Y_s)\big|\,\big]-1 \big)
\Big\} ,
\end{split}
 \end{align}
where $\tau^Y(B)$ denotes the first hitting time of a set $B \subset \Z^d$ by $Y$, and in the second equality, we used the assumption that $Y$ makes nearest-neighbor jumps. Note that it was shown in \cite{DrGaRaSu-10} that $-\ln \P(F_t)\sim \nu \sqrt{\frac{8\rho t}{\pi}}$.

Substituting the bounds \eqref{eq:EtProb}--\eqref{eq:sFtAsympt} into \eqref{eq:lowerBdSurvProbImm}, we find that in dimension $d=1$, the optimal choice is $R_t = t^\frac13$, which is determined by the interplay between $\P(E_t)$ and $\P(G_t)$ as $t \to \infty$. If this lower bound strategy
is optimal, then under $P^\gamma_t$, $X$ will fluctuate on the scale of $t^{\frac13}$.

%\begin{align} \label{eq:sFtAsympt0}
%\P(F_t) = \exp \Big \{
%-\nu \sum_{y \in \Z \setminus B_{R_t} }  \P^{Y}_{y} (\tau^Y(B_{R_{t}}) \leq t)
%\Big \}
%= \exp \Big \{
%-\nu \sum_{y \in \Z \setminus B_{R_t} } \P^{Y}_{0} (\tau^Y(y+B_{R_{t}}) \leq t)
%\Big \}
%\end{align}

\subsection{Rewriting in terms of the range}

Averaging out the Poisson initial condition of $\xi$,
%and using the fact that $\xi$ is reversible because the $Y$-walks are symmetric,
we can rewrite \eqref{eq:condAnnProb} as
\begin{equation} \label{eq:integratedXiSol}
Z^\gamma_{t,X}
= \exp \Big\{ \nu \sum_{y \in \Z}  (  v_X(t,y)
- 1 ) \Big\},
\end{equation}
with
\begin{equation*}
 v_X(t,y) = \E_y^Y \Big[ \exp \Big\{ -\gamma L^{Y-X}_t(0)\Big\} \Big],
\end{equation*}
where $L^{Y-X}_t(0)=\int_0^t \delta_0 (Y_s - X_{s}) \, {\rm d}s$ is the local time of $Y-X$ at $0$, introduced in \eqref{loct}.

%being the Feynman-Kac representation of the solution to the Parabolic Anderson problem
%\begin{align*}
%\left\{ \begin{array}{rll}
% \frac{\partial v}{\partial s}(s,x) &= \rho \Delta_Y v(s,x) - \gamma \delta_{X(s)}(x) v(s,x),\\
%v(0, \cdot) &\equiv 1,
%\end{array} \right.
%\end{align*}
%with $\Delta_Y$ denoting the generator of the $Y$-walks. When $\gamma=\infty$, we set $v_X(t,Y):=1_{\{  \}}$

When $\gamma=\infty$, we define $v_X(t,Y) = \P^Y_y(L^{Y-X}_t(0)=0)$, and it is easily seen that
\begin{equation}
\begin{aligned}
Z^\infty_{t, X} = \exp \Big\{-\nu \sum_{y \in \Z} \P^Y_y\big(L^{Y-X}_t(0)>0\big)\Big\} & = \exp\big\{-\nu \E^Y_0\big[\,\big|{\rm Range}_{s\in [0,t]}(Y_s -X_s)\big|\,\big] \big\} \\
& = \exp\big\{-\nu \E^Y_0\big[\,\big|{\rm Range}_{s\in [0,t]}(Y_s
 +X_s)\big|\,\big] \big\},
\end{aligned}
\end{equation}
where we used the assumption that $Y$ is symmetric, and for any $f\in D([0,t], \Z)$,
\begin{equation}\label{range}
{\rm Range}_{s\in [0,t]}(f(s)) := \{ f(s) \, : \, s \in [0,t]\}
\end{equation}
denotes the range.

When $\gamma<\infty$, $Z^\gamma_{t, X}$ admits a similar representation in terms of the range of $Y+X$. Indeed, let $N_t:=\{J_1<J_2<\cdots\}$
be an independent Poisson point process on $[0,\infty)$ with rate $\gamma \in (0,\infty)$, and define
\begin{equation}\label{softrange}
{\rm SoftRange}_{s\in [0,t]}(f(s)) := \{ f(J_k) \, : \, k \in \N, \, J_k \in [0,t]\}.
\end{equation}
Probability and expectation for $N$ will be denoted by adding the superscript $N$ to $\P$ and $\E$. We can then rewrite \eqref{eq:integratedXiSol} as
\begin{equation}\label{ZgammaRep}
\begin{aligned}
Z^\gamma_{t,X} & = \exp\Big\{-\nu \sum_{y\in\Z} \P^{Y,N}_y
\big(0\in {\rm SoftRange}_{s\in [0,t]}(Y_s-X_s) \big)\Big\} \\
& = \exp\big\{-\nu \E^{Y,N}_0\big[\,\big|{\rm SoftRange}_{s\in [0,t]}(Y_s+X_s)\big|\,\big] \big\}.
\end{aligned}
\end{equation}

\subsection{Proof of Sub-diffusivity of X}

%% Using the When $Y$ is a simple random walk on $\Z$, we can combine
%% \eqref{ZgammaRep} with the bounds in
%% \eqref{eq:lowerBdSurvProbImm}--\eqref{eq:sFtAsympt}, with
%% $R_t=t^{\frac13}$,

To control the path measure $P_t^\gamma$ (cf.~\eqref{eq:Gibbs}) we
could try to proceed with the bounds outlined in
\eqref{eq:lowerBdSurvProbImm}--\eqref{eq:sFtAsympt}. We cannot use
\eqref{eq:sFtAsympt} directly as we had assumed that $Y$ was a simple
random walk for that particular bound. To circumvent this we use
Theorem \ref{thm:range}.  However note that, we are free to use the bounds 
\eqref{eq:lowerBdSurvProbImm}--\eqref{eq:GtUpperBd} for $Y$ as in (\ref{grwy}).  Using these with
$R_t=t^{\frac13}$ and  \eqref{ZgammaRep}  we observe that
\begin{align} \label{interbound}
&P_{t}^{\gamma} (X\in \cdot ) \leq  \frac{\E^X_0\Big[\exp\big\{-\nu \E^{Y,N}_0\big[\,\big|{\rm SoftRange}_{s\in [0,t]}(Y_s+X_s)\big|\,\big] \big\} \ind{1}_{X\in \cdot} \Big]}{e^{-c t^{\frac{1}{3}}} \P(F_t)} 
\end{align}
 
%% \begin{align}
%% \leq \frac{\E^X_0\Big[\exp\big\{-\nu \E^{Y,N}_0\big[\,\big|{\rm SoftRange}_{s\in [0,t]}(Y_s+X_s)\big|\,\big] \big\} \ind{1}_{X\in \cdot} \Big]}{\exp\big\{-\nu \E^Y_0\big[\,\big|{\rm Range}_{s\in [0,t]}(Y_s)\big|\,\big] - c_1 t^{\frac{1}{3}} \big\}} \label{eq:gibbsMeasRangeEst}\\
%% &= e^{c_1 t^{\frac{1}{3}}} \E^X_0\Big[\exp\Big\{-\nu \Big(\E^{Y,N}_0\big[\,\big|{\rm SoftRange}_{s\in [0,t]}(Y_s+X_s)\big|\,\big]- \E^Y_0\big[\,\big|{\rm Range}_{s\in [0,t]}(Y_s)\big|\,\big]\Big) \Big\} \ind{1}_{X\in \cdot} \Big], \nonumber
%% \end{align}
%% where we used $\E^X_0[Z^{\gamma}_{t,X}] \ge \E^X_0[Z^\infty_{t,X}]$ in the first inequality.

%% When $Y$ has mean zero and some finite absolute exponential moments, \eqref{eq:gibbsMeasRangeEst} is still valid thanks to Theorem \ref{thm:range}. Indeed, the simple random walk property is only used in the bound for $\P(F_t)$ in \eqref{eq:sFtAsympt}. For general $Y$, we have

When $Y$ is as in (\ref{grwy}) and satisfies (\ref{expmom}) then using
Theorem \ref{thm:range} we have
\begin{align*}
\P(F_t) & = \exp \Big \{
-\nu \sum_{y \in \Z \setminus B_{R_t} } \P^{Y}_{y} (\tau^Y(B_{R_{t}}) \leq t)
\Big \}
= \exp \Big \{ -\nu \sum_{y \in \Z \setminus B_{R_t}} \P^{Y}_{0} (\tau^Y(y+B_{R_t}) \leq t) \Big \} \\
& \ge \exp \Big \{ -\nu \Big(\E^Y_0\Big[\sup_{s\in [0,t]}Y_s -\inf_{s\in [0,t]}Y_s\Big] \Big) \Big \}
\ge \exp \Big \{ -\nu \Big(\E^Y_0\Big[ \Big| {\rm Range}_{s\in [0,t]}(Y_s)\Big|\Big] +C\ln t\Big) \Big \},
\end{align*}
where we used translation invariance and \eqref{EGt}. Since $\ln t\ll
t^{\frac13}$, this  and (\ref{interbound}) implies that 
\begin{eqnarray}
\lefteqn{P_{t}^{\gamma} (X\in \cdot ) \leq }  \label{eq:gibbsMeasRangeEst}\\&&\ e^{c_1 t^{\frac{1}{3}}} \E^X_0\Big[\exp\Big\{-\nu \Big(\E^{Y,N}_0\big[\,\big|{\rm SoftRange}_{s\in [0,t]}(Y_s+X_s)\big|\,\big]- \E^Y_0\big[\,\big|{\rm Range}_{s\in [0,t]}(Y_s)\big|\,\big]\Big) \Big\} \ind{1}_{X\in \cdot} \Big] \nonumber
\end{eqnarray}
for $t$ sufficiently large.  This  will be the starting point of our analysis of $P^\gamma_t$.

\begin{proof}[{Proof of Theorem \ref{thm:main} for simple random walks}]
We first bound the fluctuation of $X$ under $P^\gamma_t$ from below. Since $Y$ is an irreducible symmetric random walk,
Pascal's principle (see \cite[Prop. 2.1]{DrGaRaSu-10}, in particular,
 \cite[(38) \& (49)]{DrGaRaSu-10} for $\gamma < \infty$), implies that
the expected (soft) range (cf.\ \eqref{range}-\eqref{softrange}) of a $Y$ walk increases under perturbations, i.e., $\P^X_0$-a.s.,
\begin{equation} \label{eq:rangeDiff}
\E^{Y,N}_0\big[\,\big|{\rm SoftRange}_{s\in [0,t]}(Y_s+X_s)\big|\,\big]- \E^{Y,N}_0\big[\,\big|{\rm SoftRange}_{s\in [0,t]}(Y_s)\big|\,\big] \ge 0.
\end{equation}
Also note that by the definition of $F^\gamma_t(\cdot)$ in \eqref{eq:localTimeFunc} and the definition of soft range in \eqref{softrange}, we have
\begin{equation}\label{RsoR}
F_t^{\gamma}(Y) = \vert {\rm Range}_{s\in [0,t]}(Y_s)  \vert
- \E^{N}\big[\vert {\rm SoftRange}_{s\in [0,t]}(Y_s)\vert \big].
\end{equation}
Furthermore, Proposition \ref{prop:expMomLocal} applied to $Y$, combined with the exponential Markov inequality, gives
\begin{equation}\label{EFgamma}
\E_0^Y[ F_t^{\gamma} (Y)] = \int_0^\infty \P^Y_0(F_t^{\gamma}(Y) \geq m)\, {\rm d}m
 \le \int_0^\infty C(\gamma) e^{-\frac{m c(\gamma)}{1\vee \ln t}} {\rm d} m \le C\ln t
\end{equation}
for all $t$ large enough.
Hence, in combination with \eqref{eq:gibbsMeasRangeEst}, we obtain
\begin{align}
& P^\gamma_t \big( \Vert X  \Vert_t \le \alpha t^\frac13 \big) \nonumber\\
\le\ &
e^{c_1 t^{\frac{1}{3}}} \E^X_0\Big[\exp\Big\{-\nu \Big(\E^{Y,N}_0\big[\,\big|{\rm SoftRange}_{s\in [0,t]}(Y_s+X_s)\big|\,\big]- \E^Y_0\big[\,\big|{\rm Range}_{s\in [0,t]}(Y_s)\big|\,\big]\Big) \Big\}
\ind{1}_{\Vert X  \Vert_t \le \alpha t^\frac13}  \Big] \nonumber \\
\le\ &
e^{c_1 t^{\frac{1}{3}}} \E^X_0\Big[\exp\Big\{-\nu \Big(
\underbrace{\E^{Y,N}_0\big[\,\big|{\rm SoftRange}_{s\in [0,t]}(Y_s+X_s)\big|\,\big]- \E^{Y,N}_0\big[\,\big|{\rm SoftRange}_{s\in [0,t]}(Y_s)\big|\,\big]}_{
\stackrel{\eqref{eq:rangeDiff}}{\ge} 0} \Big) \Big\} \nonumber \\
& \qquad \qquad  \times \exp \big\{ \nu \E_0^Y[ F_t^{\gamma} (Y)] \big\}
\ind{1}_{\Vert X  \Vert_t \le t^\frac13}  \Big] \nonumber\\
\le\ &
e^{c_1 t^{\frac{1}{3}}} e^{\nu \E_0^Y[ F_t^{\gamma} (Y)]}\P^X_0 \Big( \Vert X \Vert_t
\le \alpha t^\frac13 \Big) \ \le\   e^{c_1 t^{\frac{1}{3}} + C\nu\ln t}  e^{\alpha^{-2} \ln (1-\rho)\, t^{\frac13}} \to 0 \quad \text{ as } t \to \infty, \label{3.19}
\end{align}
where we applied \eqref{eq:GtUpperBd} in the last inequality, with $\alpha>0$ chosen sufficiently small.

It remains to bound the fluctuation of $X$ under $P^\gamma_t$ from above. We divide into two cases: $\gamma=\infty$ or $\gamma\in (0,\infty)$.
\medskip

\noindent
{\em Case 1: $\gamma=\infty$}. By \eqref{eq:gibbsMeasRangeEst}, it suffices to show that
\begin{equation}\label{subdiffbd1}
\liminf_{t\to \infty} t^{-\frac{1}{3}-\epsilon} \!\!\!\!\!\!\!\!\!\! \inf_{X: \Vert X\Vert_t > t^{\frac{11}{24}+\epsilon}} \Big( \E^{Y}_0\big[\,\big|{\rm Range}_{s\in [0,t]}(Y_s+X_s)\big|\,\big] - \E^Y_0\big[\,\big|{\rm Range}_{s\in [0,t]}(Y_s)\big|\,\big]\Big) >0.
\end{equation}

Since $X$ and $Y$ are independent continuous time simple random walks, $Y+X$ is also a simple random walk, which allows us to write
\begin{equation}\label{3.21}
\begin{aligned}
\E^Y_0\Big[\,\big|{\rm Range}_{s\in [0,t]}(Y_s+X_s)\big|\,\Big] & = \E^Y_0\Big[\sup_{s\in [0,t]} (Y_s+X_s) - \inf_{s\in [0,t]} (Y_s+ X_s)  \Big] \\
& = \E^Y_0\Big[\sup_{s\in [0,t]} (Y_s+X_s) + \sup_{s\in [0,t]} (-Y_s-X_s)  \Big] \\
& = \E^Y_0\Big[\sup_{s\in [0,t]} (Y_s+X_s) + \sup_{s\in [0,t]} (Y_s-X_s)  \Big],
\end{aligned}
\end{equation}
where the last equality follows since $Y$ is symmetric.

Now observe that on the event $\{ \Vert X \Vert_t \ge t^{\frac{11}{24}+\epsilon}\}$, for
$$
\sigma(X) := {\rm arg\, max}_{s \in [0,t]} X_s \quad \text{and} \quad  \tau (X):= {\rm arg\, min}_{s \in [0,t]} X_s \in [0,t],
$$
where ties are broken by choosing the minimum value, one of the sets
$$
S:= \{s \in [0,t] \, : \,  X_{\sigma(X)} - X_s \ge t^{\frac{11}{24}+\epsilon}/2\} \quad \text{and} \quad
T:= \{r \in [0,t] \, : \,  X_s - X_{\tau(X)} \ge t^{\frac{11}{24}+\epsilon}/2\}
$$
has Lebesgue measure $\lambda(S)\geq t/2$ or $\lambda(T)\geq t/2.$ Without loss of generality, we can assume that $\lambda(S)\geq t/2$.
We then have
\begin{align}
& \E^Y_0\Big[\,\big|{\rm Range}_{s\in [0,t]}(Y_s+X_s)\big|\,\Big] - \E^Y_0\Big[\,\big|{\rm Range}_{s\in [0,t]}(Y_s)\big|\,\Big] \nonumber\\
= \ & \E^Y_0\Big[\sup_{s\in [0,t]} (Y_s+X_s) + \sup_{s\in [0,t]} (Y_s-X_s) - 2 \sup_{s\in [0,t]} (Y_s) \Big] \nonumber \\
\geq\ & \E^Y_0\Big[ \big(
\sup_{s\in [0,t]} (Y_s+X_s) + \sup_{s\in [0,t]} (Y_s-X_s) - 2 \sup_{s\in [0,t]} (Y_s) \big)
\ind{1}_{\sigma( Y) \in S}
\ind{1}_{Y_{\sigma(Y)} - Y_{\sigma(X)} \le t^{\frac{11}{24}}  } \Big] \nonumber \\
\geq\ & \E^Y_0\Big[ \big(
Y_{\sigma(X)}+X_{\sigma(X)} + Y_{\sigma(Y)}-X_{\sigma(Y)} - 2  Y_{\sigma(Y)} \big)
\ind{1}_{{\sigma(Y)} \in S}
\ind{1}_{Y_{{\sigma(Y)}} - Y_{\sigma(X)} \le t^{\frac{11}{24}} } \Big] \nonumber \\
\ge \ & (t^{\frac{11}{24}+\epsilon}/2 - t^{\frac{11}{24}}) \,
\P^Y_0 \big(   Y_{\sigma(Y)} - Y_{\sigma(X)} \le t^{\frac{11}{24}}, \, {\sigma(Y)} \in S \big), \label{subdiffbd2}
\end{align}
where the first inequality uses that the difference of the $\sup$'s in the expectation is non-negative.

It remains to lower bound the probability
\begin{align} \label{eq:condProbDec}
\P^Y_0 \big(   Y_{\sigma(Y)} - Y_{\sigma(X)} \le t^{\frac{11}{24}}, \,  {\sigma(Y)} \in S \big)
=\!\!  \int_S\!\! \P^Y_0 \!\big(   Y_{\sigma(Y)} - Y_{\sigma(X)} \le t^{\frac{11}{24}}\,
 \vert \, {\sigma(Y)}  = r \!\big) \P_0^Y \!( {\sigma(Y)} \in {\rm d}r \big).
\end{align}

Note that $\P^Y_0(\sigma(Y)\in {\rm d}r)$ is absolutely continuous with respect to  the Lebesgue measure $\lambda ({\rm d}r)$ with density
\begin{equation}\label{density}
\lim_{\delta\downarrow 0} \delta^{-1} \P^Y_0(\sigma(Y) \in [r, r+\delta]) = \rho\, \P^Y_0(Y_s \leq 0 \,\forall\, s\in [0, t-r]) \sum_{z<0}p_Y(z)\P^Y_z(Y_s<0 \,\forall\, s\in [0, r]),
\end{equation}
where $\rho$ is the jump rate of $Y$, $p_Y$ its jump kernel, $\sigma(Y)$ is the first time when $Y$ reaches its global maximum in the time interval $[0,t]$, and we used the observation that given $\sigma(Y)=r$ and the size of the jump at time $r$, $(Y_s)_{0\leq s\leq r}$ and $(Y_s)_{r\leq s\leq t}$ are two independent random walks. By \cite[Theorem 5.1.7]{LL10}, if $\tau_{[0,\infty)}$ denotes the first hitting time of $[0,\infty)$, then
there exist $C_1, C_2>0$ such that for all $z < 0$ and $s>|z|^2$,
\begin{equation} \label{eq:ruinAsymptotics}
C_1 \frac{\vert z \vert}{\sqrt s} \le
\P_z^Y \big(\tau_{[0,\infty)} \ge s \big) \le C_2 \frac{\vert z\vert }{\sqrt s}.
\end{equation}
Substituting the lower bound into \eqref{density}, we find that for any $\delta >0$, there exists a constant $c(\delta) >0$ such for all $t > 0$, we have
 \begin{equation} \label{eq:uniformBd}
 \P^Y_0(\sigma(Y) \in {\rm d}r) \ge \frac{c(\delta) \lambda({\rm d}r)}{t} \qquad \mbox{ on } [\delta t, (1-\delta) t].
 \end{equation}
Since $\lambda(S) \ge t/2$ by assumption, to lower bound the probability in \eqref{eq:condProbDec}, it only remains to lower bound
\begin{equation} \label{eq:besselLBprob}
\P^Y_0 \big(   Y_{\sigma(Y)} - Y_{\sigma(X)} \le t^{\frac{11}{24}}\,
 \vert \, {\sigma(Y)} = r \big)
 \end{equation}
uniformly in $r \in [\delta t, (1-\delta) t]$ with $|r-\sigma(X)|\geq \delta t$, and in $t>0$, for any $\delta<1/8$.

Note that conditioned on $\sigma(Y)=r$ and $Y_{\sigma(Y)^-}-Y_{\sigma(Y)}=z<0$, $(Y_{\sigma(Y)-s}-Y_{\sigma(Y)})_{s\in (0, r]}$ and $(Y_{\sigma(Y)+s}-Y_{\sigma(Y)})_{s\in [0, t-r]}$ are two independent conditioned random walks, starting respectively at $z$ and 0,
and conditioned respectively to not visit $[0,\infty)$ and $[1,\infty)$. Note that such conditioned random walks are comparable to a Bessel-3 process, although we will only use random walk estimates. We will only consider the case $s:=\sigma(X)-r>0$, the case $s<0$ is entirely analogous. We then get for \eqref{eq:besselLBprob} the lower bound
\begin{equation} \label{eq:LB}
\frac{\P^Y_0(  Y_s \ge -t^{\frac{11}{24}}, \, \tau_{[1,\infty)} \ge t-r )}
{\P_0^Y(\tau_{[1,\infty)} \ge t-r )}.
\end{equation}
By the Markov property, the numerator in \eqref{eq:LB} equals
\begin{align}
& \sum_{x=-\floor{t^{\frac{11}{24}}}}^{0}
\P^Y_0(Y_s=x, \tau_{[1,\infty)}\ge s)  \P_x^Y \big(\tau_{[1,\infty)} \ge t-r-s \big)\nonumber \\
\geq\ & \sum_{x=-\floor{t^{\frac{11}{24}}}}^{0}\sum_{\sqrt{s} \leq y, z\leq 2\sqrt{s}}
\P^Y_0(Y_{s/3}=y, Y_{2s/3}=z, Y_s=x, \tau_{[1,\infty)}\ge s) \P_x^Y \big(\tau_{[1,\infty)} \ge t-r-s \big) \nonumber\\
\geq\ & \sum_{x=-\floor{t^{\frac{11}{24}}}}^{0}\sum_{\sqrt{s} \leq y, z\leq 2\sqrt{s}}
\P^Y_0(Y_{s/3}=y, \tau_{[1,\infty)}>s/3) \P^Y_y(Y_{s/3}=z, \tau_{[1,\infty)}>s/3)  \nonumber\\
& \hspace{5cm} \times \P^Y_z(Y_{s/3}=x, \tau_{[1,\infty)}>s/3) \P_x^Y \big(\tau_{[1,\infty)} \ge t-r-s \big) \nonumber\\
\geq\ & C \sum_{x=-\floor{t^{\frac{11}{24}}}}^{0}\sum_{\sqrt{s} \leq y, z\leq 2\sqrt{s}}
\P^Y_0(Y_{s/3}=y, \tau_{[1,\infty)}>s/3)\cdot \frac{1}{\sqrt{s/3}}\cdot \P^Y_x(Y_{s/3}=z, \tau_{[1,\infty)}>s/3)\cdot \frac{|x-1|}{\sqrt{t-r-s}}  \nonumber \\
\geq\ & \frac{C}{t} \sum_{x=-\floor{t^{\frac{11}{24}}}}^{0}|x|\, \P^Y_0\big(Y_{s/3} \in [\sqrt{s}, 2\sqrt{s}], \tau_{[1,\infty)}>s/3\big)
\,\P^Y_x\big(Y_{s/3} \in [\sqrt{s}, 2\sqrt{s}], \tau_{[1,\infty)}>s/3\big) \nonumber\\
\geq\ & \frac{C}{t} \sum_{x=-\floor{t^{\frac{11}{24}}}}^{0}|x|\, \P^Y_0\big(\tau_{[1,\infty)}>s/3\big) \,\P^Y_x\big( \tau_{[1,\infty)}>s/3\big) \nonumber \\
\ge\ & \frac{C}{t^2} \sum_{x=-\floor{t^{\frac{11}{24}}}}^{0} |x|^2  \ge\  C t^{-2} t^{3\cdot \frac{11}{24}} = C t^{-\frac58}, \label{eq:besselBd}
\end{align}
where in the third inequality we applied the local limit theorem and \eqref{eq:ruinAsymptotics}, in the fourth inequality we used $s \geq \delta t$, and in the fifth inequality we used the fact that conditioned on $\{\tau_{[1,\infty)}>s/3\}$, $(Y_{us}/\sqrt{s})_{0\leq u\leq 1/3}$ converges in distribution to a Brownian meander if $Y_0\ll \sqrt{s}$ as $s\to\infty$ (cf.~\cite{B76}).

Since $t-r\geq \delta t$, again by \eqref{eq:ruinAsymptotics}, we find that
\begin{equation*}
 \frac{\P^Y_0(  Y_s \ge -t^{\frac{11}{24}}, \, \tau_{[1,\infty)} \ge t )}
{\P_0^Y(\tau_{[1,\infty)} \ge t-r )}\ge
C t^{-\frac58} t^{\frac12} = Ct^{-1/8}.
\end{equation*}
Plugging this into \eqref{eq:condProbDec} (recall \eqref{eq:uniformBd}) and the resulting inequality into \eqref{subdiffbd2},
we find that \eqref{subdiffbd1} holds, which concludes the proof for the case $\gamma=\infty$ and $X,Y$ are simple random walks.
\medskip

\noindent
{\em Case 2: $\gamma\in (0,\infty)$.} We first use \eqref{eq:gibbsMeasRangeEst} to upper bound
\begin{align}
& P^\gamma_{t} \big( \Vert X  \Vert_t \ge t^{\frac{11}{24}+\epsilon} \big) \nonumber\\
\le\ &
e^{c_1 t^{\frac{1}{3}}} \E^X_0\Big[\exp\Big\{-\nu \Big(\E^{Y,N}_0\big[\,\big|{\rm SoftRange}_{s\in [0,t]}(Y_s+X_s)\big|\,\big]- \E^Y_0\big[\,\big|{\rm Range}_{s\in [0,t]}(Y_s)\big|\,\big]\Big) \Big\}
\ind{1}_{\Vert X  \Vert_t \ge t^{\frac{11}{24}+\epsilon}}  \Big]  \nonumber\\
=\ &
e^{c_1 t^{\frac{1}{3}}} \E^X_0\Big[\exp\Big\{-  \nu \Big(\underbrace{
\E^Y_0\big[\,\big|{\rm Range}_{s\in [0,t]}(Y_s+X_s)\big|\,\big]- \E^Y_0\big[\,\big|{\rm Range}_{s\in [0,t]}(Y_s)\big|\,\big]}_{\stackrel{\eqref{subdiffbd1}}{\geq} ct^{\frac13+\epsilon}} \Big)\Big\}  \nonumber\\
&\qquad \qquad \qquad \times \exp\Big\{ \nu \E_0^Y[ F_t^{\gamma} (X+Y)] \Big\}
\ind{1}_{\Vert X  \Vert_t \ge t^{\frac{11}{24}+\epsilon}}  \Big], \label{eq:largeFluct}
\end{align}
where we applied \eqref{RsoR} to $Y+X$ in the last equality.

It is clear from the above bound that
\begin{equation}\label{3.31}
P^\gamma_{t} \big( \Vert X  \Vert_t \ge t^{\frac{11}{24}+\epsilon}, \E_0^Y[ F_t^{\gamma} (X+Y)]\leq t^{\frac13+\frac{\epsilon}{2}}\big) \asto{t}{0}.
\end{equation}
On the other hand, by the same calculations as in \eqref{3.19}, we have
\begin{equation}\label{3.32}
\begin{split}
P^\gamma_t\Big(\E^Y_0[F_t^{\gamma} (X+Y)]> t^{\frac13+\frac{\epsilon}{2}}\Big)
\leq \ & e^{c_1 t^{\frac{1}{3}}} e^{\nu \E_0^Y[ F_t^{\gamma} (Y)]} \, \P^X_0\big(\E^Y_0[F_t^{\gamma} (X+Y)]> t^{\frac13+\frac{\epsilon}{2}}\big) \\
\leq \ & e^{c_1 t^{\frac{1}{3}}+C\nu \ln t} e^{-\frac{c(\gamma)t^{1/3+ \epsilon/2}}{1\vee \ln t}}
\E^X_0\Big[e^{\frac{c(\gamma)}{1\vee\ln t} \E^Y_0[F_t^{\gamma} (X+Y)]}\Big] \\
\leq \ & e^{c_1 t^{\frac{1}{3}}+C\nu \ln t -\frac{c(\gamma)t^{1/3+ \epsilon/2}}{1\vee \ln t}}
\E^X_0\E^Y_0\Big[e^{\frac{c(\gamma)}{1\vee\ln t} F_t^{\gamma} (X+Y)}\Big] \\
\leq \ & C(\gamma) e^{c_1 t^{\frac{1}{3}}+C\nu \ln t -\frac{c(\gamma)t^{1/3+ \epsilon/2}}{1\vee \ln t}} \asto{t}{0},
\end{split}
\end{equation}
where we have applied Jensen's inequality and Proposition \ref{prop:expMomLocal}. Combined with \eqref{3.31}, this concludes the proof for the case $\gamma \in (0,\infty)$.
\end{proof}
\medskip

\begin{proof}[{Proof of Theorem \ref{thm:main} for general $X$ and $Y$}]
When $X$ and $Y$ are non-simple random walks, identities <such as \eqref{3.21} fails because the range of the walk is no longer the interval bounded between the walk's infimum and supremum. Theorem \ref{thm:range} allows us to salvage the argument.

The lower bound \eqref{3.19} on the fluctuation of $X$ under $P^\gamma_t$ remains valid, using \eqref{eq:gibbsMeasRangeEst} for $Y$ as in (\ref{grwy}) and (\ref{expmom}).

For the upper bound on the fluctuation of $X$ under $P^\gamma_t$, $\gamma\in (0,\infty]$, note that by \eqref{eq:gibbsMeasRangeEst},
\begin{align}
& P^\gamma_{t} \big( \Vert X  \Vert_t \ge t^{\frac{11}{24}+\epsilon} \big) \nonumber\\
\le\ &
e^{c_1 t^{\frac{1}{3}}} \E^X_0\Big[\exp\Big\{-\nu \Big(\E^{Y,N}_0\big[\,\big|{\rm SoftRange}_{s\in [0,t]}(Y_s+X_s)\big|\,\big]- \E^Y_0\big[\,\big|{\rm Range}_{s\in [0,t]}(Y_s)\big|\,\big]\Big) \Big\}
\ind{1}_{\Vert X  \Vert_t \ge t^{\frac{11}{24}+\epsilon}}  \Big]  \nonumber\\
=\ &
e^{c_1 t^{\frac{1}{3}}} \E^X_0\Big[\exp\Big\{-  \nu \Big(
\E^Y_0\big[\,\big|{\rm Range}_{s\in [0,t]}(Y_s+X_s)\big|\,\big]- \E^Y_0\big[\,\big|{\rm Range}_{s\in [0,t]}(Y_s)\big|\,\big] \Big)\Big\}  \nonumber\\
&\qquad \qquad \qquad \times \exp\Big\{ \nu \E_0^Y[ F_t^{\gamma} (X+Y)] \Big\}
\ind{1}_{\Vert X  \Vert_t \ge t^{\frac{11}{24}+\epsilon}}  \Big] \nonumber \\
=\ &
e^{c_1 t^{\frac{1}{3}}} \E^X_0\Big[\exp\Big\{-  \nu \Big(
\E^Y_0\big[\,\sup_{s\in [0,t]}(Y_s+X_s) -\inf_{s\in [0,t]}(Y_s+X_s)\,\big]- \E^Y_0\big[\,\sup_{s\in [0,t]}Y_s -\inf_{s\in [0,t]}Y_s\,\big] \Big)\Big\}  \nonumber\\
&\qquad \qquad \qquad \times \exp\Big\{ \nu \E^Y_0[G_t(Y+X)] + \nu \E_0^Y[ F_t^{\gamma} (X+Y)] \Big\}
\ind{1}_{\Vert X  \Vert_t \ge t^{\frac{11}{24}+\epsilon}}  \Big] \nonumber \\
=\ &
e^{c_1 t^{\frac{1}{3}}} \E^X_0\Big[\exp\Big\{-  \nu \Big(
\E^Y_0\big[\,\sup_{s\in [0,t]}(Y_s+X_s) +\sup_{s\in [0,t]}(Y_s-X_s) - 2 \sup_{s\in [0,t]}Y_s\,\big] \Big)\Big\}  \nonumber\\
&\qquad \qquad \qquad \times \exp\Big\{ \nu \E^Y_0[G_t(Y+X)] + \nu \E_0^Y[ F_t^{\gamma} (X+Y)] \Big\}
\ind{1}_{\Vert X  \Vert_t \ge t^{\frac{11}{24}+\epsilon}}  \Big] \label{5.18}
\end{align}
where we recall from \eqref{GtX} that
$G_t(X) := \big(\sup_{0\leq s\leq t} X_s - \inf_{0\leq s\leq t} X_s\big) - |{\rm Range}_{s\in [0,t]}(X_s)|$,
and we used the symmetry of $Y$ in the last equality. Note that when $\gamma=\infty$, $F^\gamma_t(Y+X)=0$.

The proof of \eqref{subdiffbd2} does not require $X$ and $Y$ to be simple random walks, and in particular, it implies that
$$
\liminf_{t\to \infty} t^{-\frac{1}{3}-\epsilon} \!\!\!\!\!\!\!\!\!\! \inf_{X: \Vert X\Vert_t > t^{\frac{11}{24}+\epsilon}} \E^Y_0\big[\,\sup_{s\in [0,t]}(Y_s+X_s) +\sup_{s\in [0,t]}(Y_s-X_s) - 2 \sup_{s\in [0,t]}Y_s\,\big] >0.
$$
Therefore it follows from \eqref{5.18} that
\begin{equation}\label{5.19}
P^\gamma_{t} \big( \Vert X  \Vert_t \ge t^{\frac{11}{24}+\epsilon}, \, \E_0^Y[ F_t^{\gamma} (X+Y)]\leq t^{\frac13+\frac{\epsilon}{2}}, \,
\E_0^Y[ G_t(Y+X)]\leq t^{\frac13+\frac{\epsilon}{2}} \big) \asto{t}{0}.
\end{equation}
The argument for $P^\gamma_t(\E_0^Y[ F_t^{\gamma} (X+Y)]> t^{\frac13+\frac{\epsilon}{2}})\to 0$ in \eqref{3.32} is still valid, while the same argument as in \eqref{3.32} with $F^\gamma_t(Y+X)$ replaced by $G_t(Y+X)$, together with Theorem \ref{thm:range}, shows that we also have
$P^\gamma_t(\E_0^Y[ G_t (Y+X)]> t^{\frac13+\frac{\epsilon}{2}})\to 0$ as $t\to\infty$. This completes the proof.
\end{proof}

\bigskip

\noindent{\textbf{Acknowledgement.}} The authors would like to thank
the Columbia University Mathematics Department, the Forschungsinstitut
f\"ur Mathematik at ETH Z\"urich, Indian Statistical Institute
Bangalore, and the National University of Singapore for hospitality
and financial support. R.S.\ is supported by NUS grant
R-146-000-185-112. S.A is supported by CPDA grant from the Indian
Statistical Institute.

\end{document}